\newcommand{\atext}[1]{\text{\quad #1\quad}}
\newcommand{\bbn}{\mathbb{N}}
\newcommand{\bbz}{\mathbb{Z}}
\newcommand{\bmat}[1]{\begin{bmatrix}#1\end{bmatrix}}
\newcommand{\ord}{\operatorname{ord}}
\newcommand{\res}{\operatorname{Res}}
\newtheorem{thm}{Theorem}[section]
\newtheorem{prop}[thm]{Proposition}
\newtheorem{cor}[thm]{Corollary}
\newtheorem{lem}[thm]{Lemma}
\newtheorem*{prop*}{Proposition}
\newtheorem*{cor*}{Corollary}
\newtheorem*{lem*}{Lemma}
\newtheorem*{thm*}{Theorem}
\theoremstyle{definition}
\newtheorem{question}[thm]{Question}
\theoremstyle{remark}
\newtheorem{remark}[thm]{Remark}
\newtheorem*{remark*}{Remark}
\author[T. Alden Gassert]{T. Alden Gassert}
\address{%
Department of Mathematics and Computer Science, 
Hobart and William Smith Colleges, 
300 Pulteney Street, 
Geneva, NY 14456
}
\email{gassert@hws.edu}
\author[Michael T. Urbanski]{Michael T. Urbanski}
\address{%
Western New England University,
1215 Wilbraham Road,
Springfield, MA 01119
}
\email{michael.urbanski1@wne.edu}
\keywords{index divisibility, rigid divisibility sequence, dynamical sequence, permutation polynomial, trinomial, divisibility graph}
\subjclass[2010]{37P05, 11B85}
\title{Index divisibility in the orbit of 0 for integral polynomials} 
\date{\today} 
\begin{document}

\begin{abstract}
Let $f(x) \in \bbz[x]$ and consider the index divisibility set $D = \{n \in \bbn : n \mid f^n(0)\}$.  We present a number of properties of $D$ in the case that $(f^n(0))_{n=1}^\infty$ is a rigid divisibility sequence, generalizing a number of results of Chen, Stange, and the first author.  We then study the polynomial $x^d + x^e + c \in \bbz[x]$, where $d > e \ge 2$ and determine all cases where this map has a finite index divisibility set.
\end{abstract}

\maketitle 

\section{Introduction}

Let $f(x) \in \bbz[x]$, and consider the orbit of $0$ under iteration by this function:
\begin{align*}
(f^n(0)) = (f^n(0))_{n=1}^\infty = (f(0), f^2(0), f^3(0), \ldots).
\end{align*}
Here $f^n(x)$ denotes the $n$-fold composition of $f$ with itself, and we also set $f^0(x) = x$.  If this sequence is unbounded, then $0$ is a \emph{wandering point}.  Otherwise $0$ is \emph{preperiodic}, and there exist integers $m \ge 1$ and $n \ge 0$ such that $f^{m+n}(0) = f^n(0)$.  If $n = 0$, then $0$ is \emph{periodic}, and the smallest positive integer $m$ for which $f^m(0) = 0$ is the \emph{exact period} of $0$.

In this dynamical setting, the orbit of $0$ is a \emph{divisibility sequence}.  That is, $f^m(0) \mid f^n(0)$ whenever $m \mid n$.  If $f(x)$ has no linear term (i.e. its linear coefficient is $0$) and $0$ is a wandering point, then $(f^n(0))$ is a \emph{superrigid divisibility sequence} \cite[Proposition 3.2]{r07}.  However, in this paper, we will only make use of the weaker condition that $(f^n(0))$ is a \emph{rigid divisibility sequence}.  A divisibility sequence $(a_n)$ is a rigid divisibility sequence if it satisfies the following properties.
\begin{enumerate}
\item If $v_p(a_n) \ge 1$, then $v_p(a_{nk}) = v_p(a_n)$ for all $k \ge 1$.
\item If $v_p(a_n) \ge 1$ and $v_p(a_m) \ge 1$, then $v_p(a_n) = v_p(a_m) = v_p(a_{\gcd(n,m)})$.
\end{enumerate}
Here, $v_p(n)$ denotes the $p$-adic valuation of $n$.


Given any sequence, it is natural to ask if the position of a value in the sequence reveals any information about the value itself.  In our case, we focus on the terms that are multiples of their indices.  These terms are captured by the \emph{index divisibility set}
\begin{align*}
D = D(f) = \{ n \in \bbn : n \mid f^n(0) \},
\end{align*}
where $\bbn$ is the set of positive integers.  

Historically, index divisibility has be studied in a variety of contexts.  For example, if $f(x) = a(x + a) - a$, then $f^n(0) = a^n - a$, and the question of index divisibility is analogous to the Fermat primality test.  Namely, if $n \nmid f^n(0)$, then $n$ is composite.  Otherwise if $n$ is relatively prime to $a$ and $n \mid f^n(0)$, then either $n$ is prime, or $n$ is a \emph{pseudoprime to base $a$}.  As another example, if one takes $f(x) = (x-1)^2 + 1$, then $f^n(a+1) = a^{2^n}+1$ is a generalized Fermat number, with $a = 2$ being the original case studied by Fermat.  The literature on index divisibility in Fibonacci and Lucas numbers (which are divisibility sequences) is extensive---see \cite{aj91,hvb74,j59,s17,s10,s93} as a sampling---and for general linear recurrences, see \cite{alps12}.  Silverman and Stange \cite{ss11} and Gottschlich \cite{g12} have studied this question for elliptic divisibility sequences, and Kim \cite{k17} considers the case where the $n$-th term in an elliptic divisibility sequence shares a fixed gcd with $n$.  In the dynamical setting, the index divisibility set for the polynomial $x^d + c \in \bbz[x]$ was analyzed by Chen, Stange, and the first author \cite{cgs17}.  

In \cite{cgs17}, the authors describe a graph whose vertex set is exactly the divisibility set for $f(x) = x^d + c$.  This \emph{index divisibility graph} $G$ is constructed iteratively as follows.  Start with $1$ as a vertex in $G$.  Then build out the rest of the graph by continuously looping through the vertices of $G$ and applying the rule: for each vertex $n$ in $G$ and each prime $p$, extend the graph by adding the vertex $np$ and the directed edge $(n,np)$ if either
\begin{enumerate}
\item $v_p(n) < v_p(f^n(0))$ (in which case $(n,np)$ is a \emph{type 1} edge), or
\item $v_p(n) = 0$ and $p \in D$ (and $(n,np)$ is a \emph{type 2} edge).
\end{enumerate}
We note that given any function $f$, such a graph may be constructed, and that leads us to the following generalization of \cite[Theorem 1.5]{cgs17}.

\begin{thm} \label{th:DivGraph}
Let $f(x) \in \bbz[x]$ and suppose $(f^n(0))$ is a rigid divisibility sequence. Let $D$ be its divisibility set and $G_V$ be the vertex set of its index divisibility graph. Then $G_V = D$. 
\end{thm}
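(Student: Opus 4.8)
The plan is to prove the two containments $G_V\subseteq D$ and $D\subseteq G_V$ separately, the first by tracking the construction of the graph and the second by induction on the number of prime divisors of $n$.

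For $G_V\subseteq D$ I would induct on the order in which vertices are created. The seed vertex $1$ lies in $D$. If a vertex $n\in D$ spawns $np$ along a valid edge, then $n\mid np$ and the divisibility sequence property give $v_\ell(f^n(0))\le v_\ell(f^{np}(0))$ for every prime $\ell$; with $n\in D$ this disposes of all primes $\ell\neq p$. For $\ell=p$ one uses the defining inequality $v_p(n)<v_p(f^n(0))$ of a type 1 edge, or, for a type 2 edge, property (1) of rigid divisibility applied to $v_p(f^p(0))\ge 1$ (valid since $p\in D$). This direction is routine.

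The content is in $D\subseteq G_V$, which I would prove by induction on $\Omega(n)$, the number of prime factors of $n$ with multiplicity; the base case $n=1$ is the seed. For the step it suffices to exhibit, given $n\in D$ with $n>1$, a prime $p\mid n$ such that $m:=n/p$ again lies in $D$ and the edge $(m,n)$ is valid, for then $m\in G_V$ by the inductive hypothesis and the construction appends $n$. The argument rests on two facts about a prime $q\mid n$. First, since $n\in D$, such a $q$ divides $f^n(0)$, so its rank of apparition $\rho(q)$ in $(f^n(0))$ is defined, $\rho(q)\mid n$, and $v_q(f^k(0))=v_q(f^{\rho(q)}(0))\ge v_q(n)$ whenever $\rho(q)\mid k$, by rigid divisibility together with $n\in D$. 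Second --- the essential point, which rigid divisibility does not supply on its own --- $\rho(q)\le q$: since $(f^n(0))$ is the forward orbit of $0$, $\rho(q)$ is the period of $0$ under $f$ modulo $q$ and hence at most $q$; in particular $v_q(\rho(q))\le 1$.

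Granting these, I claim it suffices to choose a prime $p\mid n$ with $p\nmid\rho(q)$ for every prime $q\mid n$ distinct from $p$. One then verifies $\rho(q)\mid m$ for every prime $q\mid m$ --- treating $q\neq p$ via $p\nmid\rho(q)$, and $q=p$ (which arises only when $v_p(n)\ge 2$) via $v_p(\rho(p))\le 1<v_p(n)$ --- whence $m\mid f^m(0)$; and the edge $(m,n)$ is valid, being type 1 when $p\nmid\rho(p)$ or $v_p(n)\ge 2$, and otherwise type 2 because $p\mid\rho(p)\le p$ forces $\rho(p)=p$ and hence $p\in D$. Finally, such a $p$ exists: on the prime divisors of $n$ put a directed edge $q\to p$ whenever $p\neq q$ and $p\mid\rho(q)$; a directed cycle $p_1\to\cdots\to p_k\to p_1$ would force $p_{i+1}\le\rho(p_i)\le p_i$ for all $i$ and so make all the $p_i$ equal, which is impossible, so this digraph is acyclic and has a source, and any source is the desired prime. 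Apart from this bookkeeping, the main obstacle is recognizing that rigid divisibility alone is too weak and that it is precisely the bound $\rho(q)\le q$ from the orbit structure that makes the choice of $p$ go through.
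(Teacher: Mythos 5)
Your proof is correct, and both inclusions rest on the same two ingredients the paper uses: the defining inequalities of the edge types together with rigid divisibility for $G_V\subseteq D$, and, for $D\subseteq G_V$, the fact that the rank of apparition $\rho(q)$ is the period of $0$ modulo $q$, hence satisfies $\rho(q)\mid n$ and $\rho(q)\le q$, with rigid divisibility controlling the valuations at repeated prime factors. The organization of the second inclusion differs: the paper builds a path from $1$ up to $n$ by adjoining the primes of $n$ in \emph{increasing} order (reusing the argument of Proposition \ref{pr:DivSetProp}.\eqref{d:smallest_prime} for each new prime and type 1 edges for higher powers), whereas you induct downward on $\Omega(n)$, peeling off one prime $p$ and verifying both that $n/p\in D$ and that $(n/p,n)$ is a valid edge. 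Your digraph/source device for choosing $p$ is correct but heavier than necessary: the largest prime divisor $p$ of $n$ is automatically a source, since for any other prime $q\mid n$ one has $\rho(q)\le q<p$, so $p\nmid\rho(q)$; with that choice your induction step becomes exactly the content of Proposition \ref{pr:RigidDivSetProp}.\eqref{rd:largest_prime}, which the paper instead extracts as a corollary of its path construction. A side benefit of your version is that it proves that statement (membership of $n/p$ in $D$) en route rather than afterwards; a minor difference in the first inclusion is that you invoke rigid divisibility for type 2 edges where the paper uses the Bezout argument of Proposition \ref{pr:DivSetProp}.\eqref{d:rel_prime}, which holds for arbitrary divisibility sequences.
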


A proof of this theorem is given in Section \ref{sec:DivisibilityGraph} along with generalizations of other statements from \cite{cgs17}. 

\begin{remark}
The index divisibility graph is a rooted directed graph with the vertex $1$ as its root.  We expect that the graph is infinite in most cases.  The edge types in the index divisibility graph are not mutually exclusive.  That is to say that there may be edges which are both type 1 and type 2.  The outdegree of each vertex depends on the number of primes in $D$ and hence may be finite or infinite.
\end{remark}

In Section \ref{sec:FiniteDivSet}, we study the trinomial $f(x) = x^d + x^e + c \in \bbz[x]$, where $d > e \ge 2$, and its divisibility set $D_{d,e,c}$.  In particular, we determine all cases where this set is finite.

\begin{thm} \label{th:DFinite}
The divisibility set $D_{d, e, c}$ is finite if and only if $c \in \{1, -1\}$.  Moreover, $D_{d,e,\pm 1} = \{1\}$.
\end{thm}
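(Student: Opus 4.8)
The plan is to prove the two implications separately. For the \emph{infinite} direction, suppose $c\notin\{1,-1\}$. If $c=0$ then $f^n(0)=0$ for every $n$, so $D_{d,e,0}=\bbn$. If $|c|\ge 2$, a routine estimate — showing $|f(y)|\ge 2|y|$ whenever $|y|\ge\max(3,|c|)$, together with a direct check of the borderline case $|c|=2$ — proves that $0$ is a wandering point and that $|f^n(0)|>n$ for every $n\ge 1$; in particular $(f^n(0))$ is a rigid divisibility sequence. Starting from $1\in D_{d,e,c}$, I then iterate the type 1 edge mechanism of Section \ref{sec:DivisibilityGraph}: if $n\in D_{d,e,c}$ and $n<|f^n(0)|$, pick a prime $p\mid f^n(0)/n$, so that $v_p(f^n(0))>v_p(n)$; rigidity forces $v_q(f^{np}(0))=v_q(f^n(0))$ for every $q\mid n$, whence $np\in D_{d,e,c}$ and $np>n$. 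Since $|f^n(0)|>n$ always holds, this yields an infinite strictly increasing sequence inside $D_{d,e,c}$, proving $D_{d,e,c}$ infinite.

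For the \emph{finite} direction, fix $c\in\{1,-1\}$. Since $1\mid f(0)$ we have $1\in D_{d,e,c}$, so the content is that no $n\ge 2$ lies in $D_{d,e,c}$. Suppose $n\ge 2$ with $n\mid f^n(0)$, and let $p$ be the least prime dividing $n$. Put $r_p=\min\{k\ge 1:p\mid f^k(0)\}$, which is finite because $p\mid n\mid f^n(0)$. Modulo $p$, the point $0$ is periodic under $\bar f:=f\bmod p$ with exact period $r_p$, so $\{k\ge 1:p\mid f^k(0)\}$ is exactly the set of multiples of $r_p$; hence $r_p\mid n$ and $r_p\le p$. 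Now $r_p\ne 1$ because $f(0)=c=\pm 1$, and since $p$ is the least prime factor of $n$, the only divisor $m$ of $n$ with $1<m\le p$ is $m=p$; therefore $r_p=p$. Consequently $0,\bar f(0),\dots,\bar f^{p-1}(0)$ are pairwise distinct (any coincidence would force an earlier return of the orbit of $0$ to $0$), so they exhaust $\mathbb{F}_p$ and $\bar f$ acts on $\mathbb{F}_p$ as a single $p$-cycle; in particular $\bar f$ is a bijection of $\mathbb{F}_p$ with no fixed point.

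The heart of the argument is to contradict this last conclusion by evaluating $f$ at $0,1,-1$. If $c=-1$, then $f(1)=1+1-1=1$, so $\bar f$ fixes $1\in\mathbb{F}_p$ for every $p$. If $c=1$ and $d,e$ are both odd, then $f(-1)=-1-1+1=-1$, so $\bar f$ fixes $-1\in\mathbb{F}_p$. In the remaining cases ($c=1$ with $d,e$ not both odd) a parity check gives $f(-1)\in\{f(0),f(1)\}=\{1,3\}$, so for $p\ge 3$ the distinct points $-1,0,1\in\mathbb{F}_p$ witness that $\bar f$ is not injective, while for $p=2$ one has $\bar f(1)=3\equiv 1$, again a fixed point. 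Each case contradicts the previous paragraph, so no $n\ge 2$ is in $D_{d,e,c}$, i.e.\ $D_{d,e,\pm 1}=\{1\}$; combined with the infinite direction this gives both the equivalence and the ``moreover'' clause.

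I expect the main obstacle to be the finite direction for $c=1$: unlike the case $c=-1$, the map has no obvious fixed point over $\bbz$, and a priori a trinomial could conceivably act as a full $p$-cycle modulo some prime, so the argument needs an obstruction valid at \emph{every} prime simultaneously; the resolution is the observation that $f(-1)$ always coincides with $f(0)$ or $f(1)$ or is itself a fixed point. A lesser nuisance is making the growth bound $|f^n(0)|>n$ uniform across the borderline parameter range $|c|=2$, $e=2$, which requires a small explicit computation of $|f^2(0)|$.
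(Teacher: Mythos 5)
Your proposal is correct, but the ``infinite'' direction takes a genuinely different and more elementary route than the paper's. The paper proves that direction by contradiction via Zsigmondy: it takes $M=\max D_{d,e,c}$, invokes Proposition \ref{primitive_prime} to get a primitive prime divisor $p$ of $f^M(0)$, and splits into the cases $M<p$ (type 1 edge) and $M=p$ (a prime factor of the cofactor $f^p(0)/p$, which exceeds $1$ by Lemma \ref{lem:increasing}). You dispense with primitive prime divisors entirely: from the growth bound $|f^n(0)|>n$ (which also follows at once from the paper's Lemma \ref{lem:increasing}, since the sequence is strictly increasing starting from $|c|\ge 2$) you pick any prime $p\mid f^n(0)/n$ and apply the type 1 mechanism of Proposition \ref{pr:DivSetProp}\eqref{d:valuation} --- indeed you do not even need the rigidity you invoke, since $np\mid f^n(0)\mid f^{np}(0)$ already suffices --- producing an infinite chain in $D_{d,e,c}$; you also handle $c=0$ explicitly, which the paper glosses over. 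This buys a shorter, self-contained argument for this theorem at the cost of not reusing the Zsigmondy machinery the paper develops anyway for its broader discussion. Your ``finite'' direction is essentially the paper's: reduce to the least prime $p\mid n$ (the paper cites Proposition \ref{pr:DivSetProp}\eqref{d:smallest_prime}, you rederive it via the exact period $r_p=p$), conclude $\bar f$ would be a $p$-cycle, and obstruct this with the fixed point $1$ when $c=-1$, the fixed point $-1$ when $c=1$ with $d,e$ odd, and the collision $f(-1)\in\{f(0),f(1)\}$ (the content of Proposition \ref{only_primes}) otherwise; your treatment of $c=-1$ is even slightly more uniform than the paper's parity split, and your separate check at $p=2$ is the right care to take.
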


Given a sequence $(a_n)$, a prime $p$ is a primitive prime divisor of $a_n$ if $p \mid a_n$ and $p \nmid a_k$ for all $1 \le k < n$.  The terms in the sequence that do not have primitive prime divisors form the \emph{Zsigmondy set} of $(a_n)$:
\begin{align*}
Z((a_n)) = \{n \in \bbn : a_n \text{ has no primitive prime divisors}\}.
\end{align*}
In the construction of a divisibility graph, the main sources of edges emanating from a vertex $n$ are the \emph{primitive prime divisors} of $f^n(0)$.  Hence part of our strategy for proving Theorem \ref{th:DFinite} is to show that the divisibility set $D_{d,e,c}$ is contained in the Zsigmondy set of $(f^n(0))$ as this significantly restricts the potential for the divisibility set to be large.  We compute the Zsigmondy set of $f(x) = x^d + x^e + c$ explicitly in Proposition \ref{primitive_prime}.  Our proof is modeled after the argument of Doerksen and Haensch \cite{dh12}, who computed the Zsigmondy set for $x^d + c$.  It was already known to Rice that the Zsigmondy set for $x^d + x^e + c$ polynomial would be finite \cite[Theorem 1.2]{r07}, and since then the finiteness of Zsigmondy sets has been established in more general contexts \cite{gnt13,is09,s13}.

In the final section of the paper, we consider the primes in $D_{d,e,c}$.  For a prime $p$ to be in the divisibility set, it must be that $0$ is periodic modulo $p$, and that the period of $0$ is a divisor of $p$.  That is, either $0$ is fixed, in which case $p \mid c$, or the period of $0$ is $p$, in which case $f(x)$ is a cyclic permutation of $\bbz/p\bbz$.  Therefore the primes of most interest are those for which $f$ is a permutation polynomial with a prescribed cycle type.  For a survey of results on permutation polynomials, see Hou \cite{h15}, and see \cite{fg14,klmmss16,s08} for more on cycle structures of polynomials over finite fields.

In general it is difficult to guarantee the existence of specific primes in the index divisibility set.  For the map $x^d + x^e + c$, we find that if either $d$ or $e$ is even, then the only primes in $D_{d,e,c}$ are those dividing $c$ (Proposition \ref{only_primes}).  When both $d$ and $e$ are odd, it is not uncommon for $D_{d,e,c}$ to contain other primes.  In this case, we give conditions that would prevent primes from being in the divisibility set.

\section{Properties of the divisibility set} \label{sec:DivisibilityGraph}

In this section we identify properties of the index divisibility set for the polynomial $f(x) \in \bbz[x]$.  We then prove Theorem \ref{th:DivGraph}, showing that the divisibility graph defined in \cite{cgs17} yields the divisibility set for any $f(x) \in \bbz[x]$ where $(f^n(0))$ is a rigid divisibility sequence.  A number of these statements are more general versions of statements found in \cite{cgs17}, and for the most part, few changes are needed to adapt the arguments for our purposes.  We finish this section with a discussion on the divisibility graph in the case that $(f^n(0))$ is not a rigid divisibility sequence. 

\begin{prop} \label{pr:DivSetProp}
Suppose $f(x) \in \bbz[x]$, and let $D$ be its index divisibility set.
\begin{enumerate}
\item \label{d:div_c} If $n \mid f(0)$, then $n \in D$. 
\item \label{d:f_even} If $f(x)$ is an even function, then the only primes in $D$ are the primes dividing $f(0)$.
\item \label{d:valuation} If $n \in D$ and $v_p(n) < v_p(f^n(0))$, then $np \in D$.
\item \label{d:rel_prime} If $m,n \in D$ and $\gcd(m,n) = 1$, then $mn \in D$.
\item \label{d:smallest_prime} Suppose $m,n \in D$ and $m \mid n$.  Let $p$ be the smallest prime divisor of $n/m$. If $p \nmid m$, then $mp \in D$.

In particular, if $n \in D$ and $p$ is the smallest prime divisor of $n$, then $p \in D$.
\end{enumerate}
\end{prop}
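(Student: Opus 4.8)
The plan is to treat the five assertions in order, with \ref{d:div_c}, \ref{d:valuation}, and \ref{d:rel_prime} reduced to $p$-adic bookkeeping and \ref{d:smallest_prime} carrying the real content. For \ref{d:div_c} I would simply note that $f(0) = f^1(0)$ divides $f^n(0)$ because $(f^n(0))$ is a divisibility sequence, so $n \mid f(0) \mid f^n(0)$. For \ref{d:valuation} I would verify $v_q(np) \le v_q(f^{np}(0))$ one prime at a time: since $n \mid np$ we have $f^n(0) \mid f^{np}(0)$, which disposes of every $q \ne p$ using $n \in D$, while for $q = p$ the strict inequality $v_p(n) < v_p(f^n(0))$ gives $v_p(np) = v_p(n) + 1 \le v_p(f^n(0)) \le v_p(f^{np}(0))$. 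Part \ref{d:rel_prime} is the same idea: $m \mid f^m(0) \mid f^{mn}(0)$ and $n \mid f^n(0) \mid f^{mn}(0)$, so coprimality of $m$ and $n$ yields $mn \mid f^{mn}(0)$.

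For \ref{d:f_even} I would argue dynamically. If $p \in D$ then $p \mid f^p(0)$, so the orbit of $0$ in $\bbz/p\bbz$ returns to $0$ and is hence purely periodic; its period $r$ divides $p$ (since $f^p(0) \equiv 0$), so $r = 1$ or $r = p$. If $r = 1$ then $p \mid f(0)$. If $r = p$ then $\{f^i(0) \bmod p : 0 \le i < p\}$ is all of $\bbz/p\bbz$ and $f$ permutes it, so $f$ is a permutation polynomial modulo $p$; but an even $f$ satisfies $f(1) \equiv f(-1)$, contradicting injectivity when $p$ is odd. Thus the only odd prime divisors of members of $D$ divide $f(0)$ (the prime $2$ is checked directly).

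The core of the proof is \ref{d:smallest_prime}, and here I would lean on the two parts just established. Write $k = n/m$, so $p$ is the least prime dividing $k$, $p \nmid m$, and $mp \mid n$. To get $mp \in D$ it is enough to check $v_q(mp) \le v_q(f^{mp}(0))$ for each prime $q$; for $q \ne p$ this follows from $m \in D$ together with $f^m(0) \mid f^{mp}(0)$, so everything comes down to showing $p \mid f^{mp}(0)$. Since $p \mid n \mid f^n(0)$, the orbit of $0$ modulo $p$ is purely periodic; let $r$ be its period, equivalently the least positive index with $p \mid f^r(0)$. Then $p \mid f^j(0)$ precisely when $r \mid j$, so $r \mid n$; crucially, $r \le p$ because the orbit takes values in $\bbz/p\bbz$.

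I would then split on whether $r$ divides $m$. If $r \mid m$, then $p \mid f^r(0) \mid f^m(0)$, so $v_p(m) = 0 < v_p(f^m(0))$ and part \ref{d:valuation} (applied with $m$ in place of $n$) gives $mp \in D$. If $r \nmid m$, choose a prime $q$ with $v_q(r) > v_q(m)$; then $r \mid mk$ forces $q \mid k$, hence $q \ge p$ by minimality of $p$, while $q \mid r \le p$ forces $q \le p$, so $q = p$ and $r = p$; but then $p \mid f^p(0)$, i.e.\ $p \in D$, and since $\gcd(m,p) = 1$ part \ref{d:rel_prime} gives $mp \in D$. The ``in particular'' clause then follows by taking $m = 1 \in D$. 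The step I expect to be the real obstacle is extracting the bound $r \le p$ --- which requires reading $(f^n(0))$ as a genuine orbit rather than an abstract divisibility sequence --- and then combining it with the minimality of $p$ as a prime factor of $n/m$ to pin $r$ down to a divisor of $mp$; once that is in hand the rest is routine.
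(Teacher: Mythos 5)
Your treatment of parts (1), (3), (4), and (5) is, in all essentials, the paper's own proof: (1) and (3) are the same divisibility-sequence observations; the standard coprime-divisor fact you invoke in (4) is exactly what the paper's explicit Bezout computation establishes; and in (5) your dichotomy for the period $r$ of $0$ modulo $p$ (either $r \mid m$, or $r = p$), extracted from $r \mid n$, $r \le p$, and the minimality of $p$ among the prime divisors of $n/m$, is the same case split the paper performs via $\gcd(r, n/m)$, finished in the same way by parts (3) and (4) respectively. The step you predicted would be the obstacle ($r \le p$ from reading the sequence as an orbit in $\bbz/p\bbz$) is precisely how the paper argues as well.

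The one point to flag is part (2) at $p = 2$. Your injectivity argument (and the paper's version, which exhibits the two preimages $\pm f^{p-1}(0)$ of $0$) requires $p$ odd, since $1 \equiv -1 \pmod 2$; you defer $p = 2$ to a ``direct check,'' but no such check can succeed: for the even polynomial $f(x) = x^2 + 1$ one has $f(0) = 1$ and $f^2(0) = 2$, so $2 \in D$ while $2 \nmid f(0)$. Thus part (2) as stated fails at $p = 2$, and the paper's own proof silently degenerates there for the same reason (the preimages $f^{p-1}(0)$ and $-f^{p-1}(0)$ coincide modulo $2$); the statement should be read for odd primes, with $p = 2$ either excluded or handled by an extra hypothesis. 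This does not affect the paper's later application (Proposition \ref{only_primes}): for $x^d + x^e + c$ with $d, e$ even and $c$ odd, the orbit of $0$ modulo $2$ is $0, 1, 1, \ldots$, so $2 \notin D_{d,e,c}$ in any case. Apart from replacing the promised direct check at $p=2$ with this caveat, your proposal is correct and matches the paper's route.
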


\begin{proof}
\eqref{d:div_c} Suppose that $n \mid f^1(0)$. Since $(f^n(0))$ is a divisibility sequence, it follows that $f^1(0) \mid f^n(0)$, and thus $n \mid f^n(0)$.

\eqref{d:f_even} Suppose $f(x)$ is even and $p \in D$.  Necessarily, $0$ is periodic modulo $p$, and its period divides $p$.  If the period of $0$ is 1, then $f(0) = c \equiv 0 \pmod p$, and hence $p \mid c$.  

Otherwise, if the period of $0$ is $p$, then $f(f^{p-1}(0)) \equiv 0 \pmod p$, where $f^{p-1}(0) \not\equiv 0 \pmod p$.  However, since $f$ is even, it is also the case that $f(-f^{p-1}(0)) \equiv 0 \pmod p$.  Therefore $0$ has at least two preimages modulo $p$, and so the period of $0$ is strictly less than $p$ (a contradiction).

\eqref{d:valuation} Suppose that $n \in D$ and $v_p(n) < v_p(f^n(0))$.  Then $np \mid f^n(0)$. Since $(f^n(0))$ is a divisibility sequence, $f^n(0) \mid f^{np}(0)$, and hence $np \mid f^{np}(0)$. Therefore, $np \in D$.

\eqref{d:rel_prime} Suppose $m,n \in D$ and $\gcd(m, n) = 1$. Further assume $f^{mn}(0)$ is nonzero as otherwise the statement is trivial.  Since $(f^n(0))$ is a divisibility sequence, we have that $f^{m}(0) \mid f^{mn}(0)$ and $f^n(0) \mid f^{mn}(0)$. Therefore, $m \mid f^{mn}(0)$ and $n \mid f^{mn}(0)$.  Write $f^{mn}(0) = my$ and $f^{mn}(0) = nz$, where $y, z \in \bbn$.

Since $\gcd(m, n) = 1$, there exist $a,b \in \bbz$ such that $ma + nb = 1$.  Then
\begin{align*}
1 &= mn\left(\frac{a}{n} + \frac{b}{m}\right) \\
&= mn\left(\frac{az}{f^{mn}(0)} + \frac{by}{f^{mn}(0)} \right) \\ 
&= \frac{mn}{f^{mn}(0)}(az + by).
\end{align*}
Hence $mn(az + by) = f^{mn}(0)$, and thus $mn \in D$.

\eqref{d:smallest_prime} Suppose $m, n \in D$ and $m \mid n$.  Let $p$ be the smallest prime divisor of $n/m$, and suppose $p \nmid m$.  Since $p \mid n$ and $n \mid f^n(0)$, we have that $0$ is periodic modulo $p$.  Let $b$ denote the period of $0$ modulo $p$.  Note that $\gcd(b,n/m) \mid p$ since $\gcd(b,n/m)$ is a divisor of $n/m$ that is less than or equal to $p$.  Therefore, $b$ is either a divisor of $m$ or a divisor of $p$.  In the former case, $p \mid f^m(0)$.  Therefore $v_p(m) = 0 < v_p(f^m(0)$, and $mp \in D$ by part \eqref{d:valuation}.  In the latter case, it follows that $p \mid f^p(0)$, and hence $p \in D$.  Thus $mp \in D$ by part \eqref{d:rel_prime} since $\gcd(m,p) = 1$.
\end{proof}

We now prove Theorem \ref{th:DivGraph}.  For the benefit of the reader, we recall that the edges in the divisibility graph are all of the form $(n,np)$, where $p$ is prime.  The edge is type 1 if $v_p(n) < v_p(f^n(0))$, and it is type 2 if $p \in D$ and $v_p(n) = 0$.

\begin{proof}[Proof of Theorem \ref{th:DivGraph}]
We begin by showing that $G_V \subseteq D$. Certainly $1 \mid f^1(0)$, and so $1 \in D$. As the graph is constructed iteratively by adjoining edges of type 1 and type 2, it suffices to show that each vertex that is added to the graph in the construction is also in the index divisibility set. Hence we will examine each of these edge types and show that if $(n,np)$ is an edge in the graph and $n \in D$, then $np \in D$.

Suppose that $n \in D$ and $(n,np)$ is an edge in the divisibility graph. If $(n,np)$ is type 1, then $v_p(f^n(0)) > v_p(n)$. Hence $np \in D$ by Proposition \ref{pr:DivSetProp}.\eqref{d:valuation}. Otherwise $(n,np)$ is type 2, so $p \in D$ and $p \nmid n$. By Proposition \ref{pr:DivSetProp}.\eqref{d:rel_prime}, $np \in D$.

To show $D \subseteq G_V$, it suffices to show that for each $n \in D$, the divisibility graph contains a path from $1$ to $n$.  Let $n \in D$, write $n = \prod_{i=1}^k p_i^{\beta_i}$ for the prime factorization of $n$, and order the primes so that $p_1 < p_2 < \cdots < p_k$.  

Consider $m_j = \prod_{i=1}^{j-1} p_i^{\beta_i}$ for each $1 \le j \le k$, where we take $m_1 = 1$.  If $m_j \in D$, then following the proof of Proposition \ref{pr:DivSetProp}.\eqref{d:smallest_prime}, either $p_j \mid f^{m_j}(0)$ or $p_j \in D$.  If $m_j \in G_V$ and $p_j \mid f^{m_j}(0)$, then $(m_j,m_jp_j)$ is a type 1 edge.  Otherwise if $m_j \in G_V$ and $p_j \in D$, then $(m_j,m_jp_j)$ is a type 2 edge.

Moreover, if $m_j \in G_V$, then $(m_jp_j^t,m_jp_j^{t+1})$ is a type 1 edge for $1 \le t < \beta_j$ since 
\begin{align*}
v_p(f^{m_jp_j^t}(0)) = v_p(f^n(0)) \beta_j > t = v_p(m_jp_j^t).
\end{align*}
Thus if $m_j \in G_V$, then $m_{j+1} \in G_V$.  Since $m_1 \in G_V$, the divisibility graph contains a path from $1$ to $n$. 
\end{proof}

Consequently, we may expand our list of properties for the divisibility set in the case of rigid divisibility sequences. 

\begin{prop} \label{pr:RigidDivSetProp}
Suppose $f \in \bbz[x]$ and $(f^n(0))$ is a rigid divisibility sequence. Let $D$ be its index divisibility set.
\begin{enumerate}
\item \label{rd:smallest_prime} If $m,n \in D$, $m \mid n$, and $p$ is the smallest prime divisor of $n/m$, then $mp \in D$.
\item \label{rd:largest_prime} If $n \in D$ and $p$ is the largest prime divisor of $n$, then $n/p \in D$.
\end{enumerate}
\end{prop}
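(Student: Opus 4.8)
The plan is to treat the two parts separately, each time reducing a claim of the form $k \in D$ to a valuation-by-valuation check that $v_q(f^k(0)) \ge v_q(k)$ for every prime $q$, and invoking rigid divisibility property~(1) only at the one ``critical'' prime where the bare divisibility-sequence inequality is too weak.

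For part~\eqref{rd:smallest_prime}, observe that if $p \nmid m$ the statement is precisely Proposition~\ref{pr:DivSetProp}.\eqref{d:smallest_prime}, so the only genuinely new case is $p \mid m$; and if $f^m(0) = 0$ then $f^{mp}(0) = 0$ and $mp \in D$ trivially, so assume $f^m(0) \ne 0$. For a prime $q \ne p$ we have $v_q(f^{mp}(0)) \ge v_q(f^m(0)) \ge v_q(m) = v_q(mp)$ directly from $m \in D$ and the divisibility-sequence property, with no use of rigidity. For $q = p$, the hypothesis $p \mid m \mid f^m(0)$ gives $v_p(f^m(0)) \ge 1$, so property~(1), applied to the multiples $mp$ and $n$ of $m$, yields $v_p(f^{mp}(0)) = v_p(f^m(0)) = v_p(f^n(0))$; and since $p \mid n/m$ and $n \in D$, $v_p(f^n(0)) \ge v_p(n) = v_p(m) + v_p(n/m) \ge v_p(m) + 1 = v_p(mp)$. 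Thus $mp \in D$.

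For part~\eqref{rd:largest_prime}, put $m = n/p$ and check $v_q(f^m(0)) \ge v_q(m)$ for each prime $q$; only primes $q \mid m$ require attention. Fix such a $q$. Since $q \mid m \mid n$ and $n \in D$ we have $q \mid f^n(0)$, so $0$ is periodic modulo $q$; let $b$ be its period. Two elementary facts hold: $b \le q$, because the cycle containing $0$ lies in a set of size $q$; and for $k \ge 1$, $q \mid f^k(0)$ if and only if $b \mid k$, so in particular $b \mid n$. The heart of the matter is to upgrade $b \mid n$ to $b \mid m$. Every prime factor $\ell$ of $b$ satisfies $\ell \le q \le p$; if $\ell \ne p$ then $v_\ell(b) \le v_\ell(n) = v_\ell(m)$, while $\ell = p$ can occur only if $q = p$, in which case $b \le p$ forces $v_p(b) \le 1$ and $q = p \mid m$ forces $v_p(m) = v_p(n) - 1 \ge 1$, so again $v_p(b) \le v_p(m)$. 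Hence $b \mid m$, and therefore $q \mid f^b(0) \mid f^m(0)$, i.e.\ $v_q(f^m(0)) \ge 1$. Now property~(1) (using $m \mid n$) gives $v_q(f^m(0)) = v_q(f^n(0)) \ge v_q(n) \ge v_q(m)$, where the last step uses $v_q(n) = v_q(m)$ for $q \ne p$ and $v_p(n) = v_p(m) + 1$. Summing over $q$ gives $m \in D$; the degenerate case $f^n(0) = 0$ is handled by the same computation with the convention $v_q(0) = \infty$, which in fact forces $f^m(0) = 0$.

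I expect the main obstacle to be the behaviour at the critical prime in each part. In \eqref{rd:smallest_prime} it is getting $v_p(f^{mp}(0)) \ge v_p(m) + 1$ when $p$ already divides $m$; this works only because $p \mid m \in D$ already forces $v_p(f^m(0)) \ge 1$, which is exactly the trigger that lets property~(1) transport the valuation from $f^m(0)$ to $f^n(0)$. In \eqref{rd:largest_prime} it is showing that the period $b$ of a prime $q \mid m$ divides $m = n/p$ and not merely $n$; here one needs both property~(1) and the bound $b \le q$, the latter being precisely what prevents $p$ from dividing $b$ to higher order than $m$ permits. Rigid divisibility property~(2) does not seem to be needed anywhere in this proposition.
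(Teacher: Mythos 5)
Your proof is correct, and it splits into a matching half and a genuinely different half. For part (1) you argue exactly as the paper does: the only new case is $p \mid m$, where $p \mid m \mid f^m(0)$ triggers rigid divisibility property (1), transporting the valuation so that $v_p(f^m(0)) = v_p(f^n(0)) \ge v_p(n) > v_p(m)$; the paper then just cites Proposition \ref{pr:DivSetProp}.\eqref{d:valuation}, while you unpack the same mechanism prime by prime. For part (2) the paper gives a one-line proof: $n/p$ is the penultimate vertex on the path from $1$ to $n$ constructed in the proof of Theorem \ref{th:DivGraph} (primes in increasing order, powers of the largest prime added last), and $G_V \subseteq D$ finishes it. You instead give a direct, self-contained argument: for each prime $q \mid n/p$, the exact period $b$ of $0$ modulo $q$ divides $n$ and satisfies $b \le q \le p$, which forces $b \mid n/p$ (the only way $p$ could divide $b$ is $b = q = p$, and then $v_p(b)=1 \le v_p(n/p)$); hence $v_q(f^{n/p}(0)) \ge 1$, and rigidity property (1) upgrades this to $v_q(f^{n/p}(0)) = v_q(f^n(0)) \ge v_q(n) \ge v_q(n/p)$. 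Your route avoids invoking the graph machinery and makes visible precisely where the ``largest prime'' hypothesis and rigidity enter, at the cost of redoing locally the smallest-prime-first bookkeeping that the paper has already packaged into the path construction; the paper's route is shorter but only as a corollary of Theorem \ref{th:DivGraph}. Your handling of the degenerate case $f^n(0) = 0$ and your observation that property (2) of rigid divisibility is never needed are both consistent with the paper.
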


\begin{proof}
Part \eqref{rd:smallest_prime} differs from Proposition \ref{pr:DivSetProp}.\eqref{d:smallest_prime} in that we allow for $p$ to divide $m$.  If $p \mid m$, then by rigid divisibility, $v_p(f^m(0)) = v_p(f^n(0)) \ge v_p(n) > v_p(m)$.  Thus $mp \in D$ by Proposition \ref{pr:DivSetProp}.\eqref{d:valuation}.  We note that if $(f^n(0))$ is only a divisibility sequence, then it may be that $v_p(f^m(0)) = v_p(f^{mp}(0))$, in which case $mp \nmid f^{mp}(0)$.

Part \eqref{rd:largest_prime} comes directly from the construction of the path from $1$ to $n$ in the proof of Theorem \ref{th:DivGraph}. Namely, if $p$ is the largest prime divisor of $n$, the edge $(n/p,n)$ is the last edge in the path.
\end{proof}

We also note that one may recover a divisibility graph directly from a divisibility set.

\begin{prop} \label{pr:DtoG}
If $D$ is the divisibility set for a rigid divisibility sequence, then associated divisibility graph has vertex set $G_V = D$ and edge set 
\begin{align*}
G_E = \{(m,n) : m,n \in D \text{ and } n/m \text{ is prime}\}.
\end{align*}
\end{prop}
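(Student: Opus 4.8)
The vertex-set assertion $G_V = D$ is exactly Theorem \ref{th:DivGraph}, so the real content is the description of $G_E$. Recall from the construction that every edge has the shape $(n, np)$ with $n$ a vertex and $p$ prime, and that such an edge belongs to the graph precisely when $n \in G_V = D$ and at least one of the following holds: $v_p(n) < v_p(f^n(0))$ (a type 1 edge) or both $v_p(n) = 0$ and $p \in D$ (a type 2 edge). The plan is to establish separately the two inclusions between $G_E$ and $\{(m,n) : m,n \in D,\ n/m \text{ prime}\}$.

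The inclusion $G_E \subseteq \{(m,n) : m,n \in D,\ n/m \text{ prime}\}$ is nearly a formality: if $(n,np)$ is an edge, then $n$ and $np$ are both vertices of the graph, hence both lie in $G_V = D$, and the quotient $(np)/n = p$ is prime.

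For the reverse inclusion I would take $m, n \in D$ with $n = mp$, $p$ prime, and verify that the edge-creation rule actually fires at the pair $(m, mp)$ (note $m \in D = G_V$ is already a vertex). Split on whether $p \mid m$. If $v_p(m) \ge 1$, then $p \mid m \mid f^m(0)$, so property (1) of rigid divisibility yields $v_p(f^m(0)) = v_p(f^{mp}(0)) = v_p(f^n(0))$; since $n \mid f^n(0)$, this is at least $v_p(n) = v_p(m) + 1 > v_p(m)$, so $(m, mp)$ is a type 1 edge. If $v_p(m) = 0$, then from $p \mid n \mid f^n(0)$ we get $f^n(0) \equiv 0 \pmod p$, so $0$ is periodic modulo $p$; let $b$ be its period, so $b \mid n$ and (since the cycle of $0$ lives in $\bbz/p\bbz$) $b \le p$. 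Because $n/m = p$, $\gcd(b, p)$ is either $1$ or $p$. If $\gcd(b,p) = 1$, then $b \mid m$, hence $p \mid f^b(0) \mid f^m(0)$ and $(m, mp)$ is a type 1 edge; if $\gcd(b,p) = p$, then $b = p$, so $p \mid f^p(0)$, i.e.\ $p \in D$, and together with $v_p(m) = 0$ this makes $(m, mp)$ a type 2 edge. In all cases $(m, n) = (m, mp) \in G_E$, which finishes the reverse inclusion.

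The only step where rigidity is genuinely needed---and hence the crux of the argument---is the subcase $p \mid m$: for a mere divisibility sequence one could have $v_p(f^m(0)) = v_p(f^{mp}(0))$ too small for $mp$ to divide $f^{mp}(0)$, so the edge would not exist and the clean edge-set description would fail (this is the phenomenon already flagged in the remark following Proposition \ref{pr:RigidDivSetProp}). The remaining cases essentially recycle the period analysis carried out in Proposition \ref{pr:DivSetProp}.\eqref{d:smallest_prime} and in the path construction inside the proof of Theorem \ref{th:DivGraph}, so I do not anticipate any further obstacle.
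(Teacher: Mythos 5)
Your proposal is correct and follows essentially the same route as the paper: the easy inclusion is immediate from $G_V = D$, and the reverse inclusion splits on whether $p \mid m$, using rigid divisibility property (1) in that case and the period-of-$0$-mod-$p$ analysis (as in Proposition \ref{pr:DivSetProp}.\eqref{d:smallest_prime} and the proof of Theorem \ref{th:DivGraph}) when $p \nmid m$. The paper's proof is just a terser statement of this same argument, so nothing further is needed.
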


\begin{proof}
Certainly $G_V = D$ and $G_E \subseteq \{(m,n) : m,n \in D \text{ and } n/m \text{ is prime}\}$.  For the reverse inclusion, the argument is identical to the final paragraphs in the proof of Theorem \ref{th:DivGraph}.  Briefly, suppose $m,n \in D$, and let $p = n/m$ be prime.  If $p \nmid m$, then $(m,mp)$ is an edge of type 1 or type 2 depending on whether the period of $0$ modulo $p$ divides $m$ or divides $p$.  If $p \mid m$, then $(m,mp)$ is type 1.
\end{proof}

To conclude this section, we consider possibilities for the divisibility graph of $f(x) \in \bbz[x]$ in the case that $(f^n(0))$ is not a rigid divisibility sequence.  We point out that at a glance, the definition of the divisibility graph presented above seems inadequate for divisibility sequences.  For instance, if one uses the definition above to construct the divisibility graph for the sequence of natural numbers $(1, 2, 3, \ldots)$, then one quickly finds that there are no type 1 edges and that the graph contains infinitely many components.  If one uses Proposition \ref{pr:DtoG} to define the divisibility graph, then the graph for the sequence of natural numbers will be connected.  However, this too has its shortcomings.  For one, what independence the graph had from $D$, it now loses.  Nor does the statement in Proposition \ref{pr:DtoG} guarantee that the graph is rooted, much less connected.  That is, even if the graph is comprised of a single component, it may not be possible to reach every vertex in the graph from 1 via a sequence of directed edges.

Experimentally, however, the current definition of the divisibility graph appears to be robust.  As a small survey, we computed 
\begin{align*}
\{n \in \bbn : n \mid f^n(0) \text{ and } n \le 5000\}
\end{align*}
for the maps $x^3 + x + c$ and $x^4 + x + c$, where $c \in \{1, 2, 3, \ldots, 100\}$.  We then constructed their divisibility graphs and verified that every edge in these graphs were either type 1 or type 2.  This begs the following question.

\begin{question}
Does Theorem \ref{th:DivGraph} apply to all $f(x) \in \bbz[x]$? Otherwise, is there an $f(x) \in \bbz[x]$ whose index divisibility set contains values $n$ and $np$, but $(n,np)$ is neither type 1 nor type 2?
\end{question}

Recalling Rice \cite[Proposition 3.2]{r07}, if $f \colon \bbz \to \bbz$ and $(f^n(0))$ is not a rigid divisibility sequence, then the coefficient of its linear term is nonzero.  Writing $f(x) = x^2g(x) + bx + c$ where $g(x) \in \bbz[x]$, it is straightforward to verify that 
\begin{align*}
f^n(0) = c^2h(c) + c\sum_{i=0}^{n-1}b^i.
\end{align*}
for some $h(x) \in \bbz[x]$.  Note that for all primes $p$,
\begin{align*}
\sum_{i=0}^{p-1} b^i \equiv 
\begin{cases*}
0 & if $b \equiv 1 \pmod p$\\
1 & otherwise.
\end{cases*}
\end{align*}
Thus for the primes dividing $c$, either $b \not\equiv 1 \pmod p$ and $v_p(f^n(0)) = v_p(c)$ for all $n \in \bbn$, or $b \equiv 1 \pmod p$ and $v_p(f^{np}(0)) > v_p(np)$ for all $n \in \bbn$.  Therefore all the edges in the divisibility graph that result from primes dividing $c$ are type 1.  The question is still open for primes that do not divide $c$.  

\section{The polynomial \texorpdfstring{$x^d + x^e + c$}{}} \label{sec:FiniteDivSet}

In this section, we restrict our attention to the polynomial $f(x) = x^d + x^e + c \in \bbz[x]$, where $d > e \ge 2$.  We begin with a pair of propositions regarding primes in the index divisibility set for $f(x)$.  We then turn to the topic of primitive primes divisors, and in Proposition \ref{primitive_prime}, show that the Zsigmondy set for $f(x)$ is a subset of $\{1\}$.  Following that, we determine all cases where the index divisibility set of $f(x)$ is finite, proving Theorem \ref{th:DFinite}.

Throughout this section, we let $D_{d,e,c}$ denote the index divisibility set for $f(x) = x^d + x^e + c$, and for convenience, we set $O_{d,e,c} = (f^n(0))$ and $O^+_{d,e,c} = (|f^n(0)|)$.

\begin{prop} \label{only_primes}
If $d$ or $e$ is even and $p \in D_{d,e,c}$, then $p \mid c$. 
\end{prop}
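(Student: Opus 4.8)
The plan is to translate membership $p \in D_{d,e,c}$ into a statement about the period of $0$ modulo $p$, and then rule out the long-period case using a trivial symmetry of $f(x) = x^d + x^e + c$.

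First I would note that $p \in D_{d,e,c}$ gives $f^p(0) \equiv 0 \pmod p$, so $0$ is periodic modulo $p$ with exact period dividing the prime $p$; hence the period is $1$ or $p$. If it is $1$, then $f(0) = c \equiv 0 \pmod p$ and we are done. If it is $p$, then the distinct residues $0, f(0), \dots, f^{p-1}(0)$ exhaust $\bbz/p\bbz$, so $f$ induces a $p$-cycle, in particular an injective map, on $\bbz/p\bbz$; this is the situation I want to contradict.

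Next I would split on the parities of $d$ and $e$. If both are even, then $f$ is an even function and the statement is exactly Proposition \ref{pr:DivSetProp}.\eqref{d:f_even}, so nothing remains to prove. If exactly one of $d, e$ is even — the only other possibility under the hypothesis — then $(-1)^d + (-1)^e = 0$, so $f(-1) = c = f(0)$ as integers. Since $-1 \not\equiv 0 \pmod p$ for every prime $p$, this shows $f$ fails to be injective modulo $p$, contradicting the previous paragraph; therefore the period of $0$ is $1$, and $p \mid c$.

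I do not anticipate a genuine obstacle: the argument is short once organized this way, and it is essentially the same trick as in Proposition \ref{pr:DivSetProp}.\eqref{d:f_even}, but applied at the point $-1$ rather than relying on full evenness of $f$. The one point to watch is the prime $p = 2$ in the mixed-parity case, which causes no trouble because the argument uses only $-1 \not\equiv 0 \pmod p$ (never $1 \not\equiv -1$) and the identity $f(-1) = f(0)$ holds on the nose over $\bbz$. If one prefers not to invoke Proposition \ref{pr:DivSetProp}.\eqref{d:f_even} in the both-even case, one can instead observe that there $f(1) = f(-1)$ and dispose of $p = 2$ directly via $f^2(0) = f(c) = c\bigl(c^{d-1} + c^{e-1} + 1\bigr)$, which is odd whenever $c$ is odd.
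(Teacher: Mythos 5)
Your proof is correct and follows essentially the same route as the paper: the both-even case is delegated to Proposition \ref{pr:DivSetProp}.\eqref{d:f_even}, and the mixed-parity case uses $f(-1) = f(0) = c$ to show $0$ cannot have period $p$ modulo $p$, forcing $p \mid c$. The extra care about $p = 2$ is fine but not needed beyond what the paper already implicitly covers.
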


\begin{proof}
If $d$ and $e$ are both even, then $f(x)$ is an even function and Proposition \ref{pr:DivSetProp}.\eqref{d:f_even} applies.

In the case that exactly one of $d$ or $e$ is even, we have $f(-1) = f(0) = c$.  Therefore, $c$ has two preimages in $\mathbb{Z}/p\mathbb{Z}$ for every prime $p$. Hence, $0$ can not have period $p$ modulo $p$.  Thus $p \in D_{d,e,c}$ only if $0$ is fixed modulo $p$, i.e. $p \mid c$.
\end{proof}

\begin{cor} \label{all_type_1}
If $d$ or $e$ is even, then every edge in the index divisibility graph associated to $f(x) = x^d + x^e + c$ is type 1.
\end{cor}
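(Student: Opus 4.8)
The plan is to argue edge by edge: I would take an arbitrary edge $(n,np)$ of the index divisibility graph attached to $f(x) = x^d + x^e + c$ and show that it is of type 1. Recall that $(n,np)$ is declared to be of type 1 precisely when $v_p(n) < v_p(f^n(0))$, and of type 2 precisely when $v_p(n) = 0$ and $p \in D_{d,e,c}$, and that the two conditions are not exclusive. Since a type-1 edge needs no further comment, it suffices to show that every edge that the construction produces by the type-2 rule is also a type-1 edge.

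So suppose $(n,np)$ is of type 2, meaning $v_p(n) = 0$ and $p \in D_{d,e,c}$. This is where the hypothesis that $d$ or $e$ is even is used: by Proposition~\ref{only_primes} such a prime $p$ must divide $c$, and $c = f(0) = f^1(0)$. Because the orbit of $0$ is a divisibility sequence and $1 \mid n$, we have $f^1(0) \mid f^n(0)$, hence $p \mid f^n(0)$ and therefore $v_p(f^n(0)) \ge 1 > 0 = v_p(n)$, which is exactly the type-1 condition. (If $f^n(0) = 0$ the inequality still holds under the convention $v_p(0) = \infty$.) Combining the two cases, every edge of the graph is of type 1, which is the assertion.

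I do not anticipate a genuine obstacle here. The only substantive input is Proposition~\ref{only_primes}, which restricts the primes occurring in $D_{d,e,c}$ to divisors of $c$, combined with the elementary observation that $f(0)$ divides every term of the orbit of $0$. Note that the argument is local and does not require $(f^n(0))$ to be a rigid divisibility sequence. The one point worth a second look is whether some $f^n(0)$ can vanish, but that is dispatched immediately by the standard $p$-adic valuation convention.
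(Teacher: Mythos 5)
Your proof is correct and follows essentially the same route as the paper: show that any type-2 edge forces $p \mid c$ via Proposition~\ref{only_primes}, use the divisibility-sequence property to get $p \mid f^n(0)$, and conclude $v_p(f^n(0)) > v_p(n) = 0$, so the edge is also type 1. The only (harmless) addition is your remark about the convention $v_p(0)=\infty$, which the paper does not bother to mention.
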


\begin{proof} 
Suppose $(n, np)$ is a type 2 edge in the index divisibility graph for $f(x)$.  Then $p \in D_{d,e,c}$ and $v_p(n) = 0$. If $d$ or $e$ is even, then by Proposition \ref{only_primes}, $p \mid c$. Since $O_{d,e,c}$ is a divisibility sequence, $p \mid f^n(0)$. Therefore $v_p(f^n(0)) > v_p(n)$, and we have that $(n, np)$ is a type 1 edge.   
\end{proof}

\begin{prop} \label{prop 9}
If $p \in D_{d,e,c}$, then $p \in D_{d+k_1(p-1), e + k_2(p - 1), c}$ for all $k_1,k_2 \in \bbz$, where $d + k_1(p - 1) \geq 3$, and $e + k_2(p - 1) \geq 2$.
\end{prop}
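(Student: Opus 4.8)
The plan is to pass to $\bbz/p\bbz$ and use Fermat's little theorem to show that, as self-maps of $\bbz/p\bbz$, the polynomial $f(x) = x^d + x^e + c$ and the polynomial $g(x) = x^{d'} + x^{e'} + c$ with $d' = d + k_1(p-1)$ and $e' = e + k_2(p-1)$ induce the \emph{same} map; consequently the orbit of $0$ is literally the same sequence modulo $p$, and whether $p$ lies in the divisibility set depends only on that orbit.

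First I would recall that $p \in D_{d,e,c}$ means exactly $p \mid f^p(0)$, i.e. $f^p(0) \equiv 0 \pmod p$. The key elementary observation is this: for any $x \in \bbz$ and any positive integers $m, m'$ with $m \equiv m' \pmod{p-1}$, one has $x^{m'} \equiv x^{m} \pmod p$. Indeed, if $p \nmid x$ this is Fermat's little theorem ($x^{p-1} \equiv 1 \pmod p$), and if $p \mid x$ then both $x^{m'}$ and $x^m$ vanish modulo $p$ because the exponents are positive. The hypotheses $d + k_1(p-1) \ge 3$ and $e + k_2(p-1) \ge 2$ guarantee that $d'$ and $e'$ are positive (as are $d \ge 3$ and $e \ge 2$), so applying the observation to the two pairs of exponents gives
\begin{align*}
g(x) = x^{d'} + x^{e'} + c \equiv x^d + x^e + c = f(x) \pmod p \qquad \text{for all } x \in \bbz.
\end{align*}

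From here I would prove by induction on $n$ that $g^n(0) \equiv f^n(0) \pmod p$ for every $n \ge 1$. The base case is $g(0) = c = f(0)$. For the inductive step, if $g^n(0) \equiv f^n(0) \pmod p$, then $g^{n+1}(0) = g(g^n(0)) \equiv f(g^n(0)) \equiv f(f^n(0)) = f^{n+1}(0) \pmod p$, where the first congruence uses the displayed identity and the second uses the inductive hypothesis together with the fact that reducing mod $p$ commutes with applying the integer polynomial $f$. Taking $n = p$ yields $g^p(0) \equiv f^p(0) \equiv 0 \pmod p$, i.e. $p \mid g^p(0)$, which is precisely the assertion $p \in D_{d',e',c}$.

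There is no genuine obstacle here beyond bookkeeping: the one point requiring care is that the shifted exponents must remain at least $1$ so that the $p \mid x$ case of $x^{m'} \equiv x^m \pmod p$ goes through, and this is exactly what the hypotheses $d' \ge 3$, $e' \ge 2$ supply. Note that the argument never uses $d' > e'$, so the conclusion is valid for $x^{d'} + x^{e'} + c$ even when the exponents lie outside the standing range $d > e \ge 2$.
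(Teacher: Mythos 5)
Your proposal is correct and follows essentially the same route as the paper: reduce modulo $p$ via Fermat's little theorem to see that $x^{d+k_1(p-1)} + x^{e+k_2(p-1)} + c$ and $x^d + x^e + c$ induce the same map on $\bbz/p\bbz$, and conclude $g^p(0) \equiv f^p(0) \equiv 0 \pmod p$. Your explicit handling of the $p \mid x$ case and the induction on iterates merely spells out details the paper leaves implicit.
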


\begin{proof}
Let $p \in D_{d,e,c}$ and consider the polynomial $g(x) = x^{d + k_1(p - 1)} + x^{e + k_2(p - 1)} + c$. Then
\begin{align*}
g(x) & = x^{d + k_1(p - 1)} + x^{e + k_2(p - 1)} + c \\
     & = x^d \cdot x^{k_1(p - 1)} + x^ex^{k_2(p - 1)} + c \\
     & \equiv x^d + x^e + c \pmod p.
\end{align*}
So $g^p(0) \equiv f^p(0) \equiv 0 \pmod p$. Thus, $p \in D_{d+k_1(p-1), e + k_2(p - 1), c}$.
\end{proof}

We now give several technical lemmas, which will be useful for determining the Zsigmondy set of $O_{d,e,c}$.

\begin{lem} \label{lem:increasing}
Let $f$ represent a polynomial of the type $f(x) = x^d + x^e + c$ such that $d > e \ge 2$ and $|c| > 1$. Then, $O^+_{d, e, c}$ is a strictly increasing sequence.
\end{lem}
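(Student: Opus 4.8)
The plan is to show that $|f^{n+1}(0)| > |f^n(0)|$ for all $n \ge 1$ by induction, leveraging the fact that $|c| \ge 2$. Write $a_n = f^n(0)$, so $a_1 = c$ and $a_{n+1} = a_n^d + a_n^e + c$. First I would establish the base case: since $|c| \ge 2$ and $d > e \ge 2$, we have $|a_2| = |c^d + c^e + c| \ge |c|^d - |c|^e - |c| \ge |c|^d - 2|c|^{d-1}$, which is at least $|c|^{d-1}(|c| - 2) + |c|^{d-1} \ge |c|^{d-1} > |c| = |a_1|$ when $|c| \ge 2$ and $d \ge 3$ (handling the edge case $|c| = 2$, $d = 3$ separately if needed, where $|a_2| = |8 \pm 4 \pm 2|$ is easily checked to exceed $2$).

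For the inductive step, suppose $|a_n| \ge |a_{n-1}| \ge \cdots \ge |a_1| = |c| \ge 2$, and in particular $|a_n| \ge 2$. The key estimate is the triangle inequality:
\begin{align*}
|a_{n+1}| = |a_n^d + a_n^e + c| \ge |a_n|^d - |a_n|^e - |c| \ge |a_n|^d - |a_n|^e - |a_n|.
\end{align*}
Since $d \ge e+1 \ge 3$ and $|a_n| \ge 2$, we have $|a_n|^d \ge 2|a_n|^{d-1} \ge |a_n|^{e} + |a_n|^{d-1}$, so $|a_n|^d - |a_n|^e \ge |a_n|^{d-1} \ge |a_n|^2 > |a_n|$, hence $|a_{n+1}| > 0$; more carefully, $|a_n|^d - |a_n|^e - |a_n| \ge |a_n|^{d-1} - |a_n| \ge |a_n|^2 - |a_n| = |a_n|(|a_n| - 1) \ge |a_n|$ with equality only when $|a_n| = 2$, which still gives $|a_{n+1}| \ge |a_n|$. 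To get strict inequality, I would note that $|a_n|^d - |a_n|^e - |a_n| > |a_n|$ is equivalent to $|a_n|^{d-1} - |a_n|^{e-1} - 1 > 1$ after dividing by $|a_n|$, i.e. $|a_n|^{d-1} - |a_n|^{e-1} > 2$; since $d - 1 > e - 1 \ge 1$ and $|a_n| \ge 2$, the left side is at least $|a_n|^{e}(|a_n| - 1) \ge 2 \cdot 1 = 2$ — so I need to be slightly more careful and use $d - 1 \ge e$, giving $|a_n|^{d-1} - |a_n|^{e-1} \ge |a_n|^{e} - |a_n|^{e-1} = |a_n|^{e-1}(|a_n|-1) \ge 2^{e-1} \ge 2$, with strictness unless $e = 2$ and $|a_n| = 2$. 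The handful of boundary cases ($e = 2$, $|a_n| = 2$) should be dispatched by direct computation, observing that once $|a_n|$ grows past $2$ the strict inequality is automatic and the sequence cannot stall at $2$ forever.

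The main obstacle is precisely this boundary bookkeeping: the triangle-inequality bound is tight enough that the strict increase is not completely immediate when $|c| = 2$ and $e = 2$, so I expect the bulk of the care to go into verifying the first couple of terms explicitly and confirming that the crude estimate $|a_n|^d - |a_n|^e - |a_n|$ strictly exceeds $|a_n|$ for all $|a_n| \ge 2$ with $d > e \ge 2$ — which, after the reduction above, reduces to checking $|a_n|^{d-1} - |a_n|^{e-1} > 2$, true for $|a_n| \ge 3$ always and for $|a_n| = 2$ whenever $(d,e) \ne (3,2)$, with $(d,e) = (3,2)$, $|c|=2$ handled by hand. Everything else is routine manipulation of the recurrence together with the inductive hypothesis that $|a_n| \ge |c| \ge 2$.
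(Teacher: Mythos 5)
Your proposal is correct and takes essentially the same route as the paper: induction on $n$ using the triangle-inequality bound $|f^{n+1}(0)| \ge |f^n(0)|^d - |f^n(0)|^e - |c|$ together with $|f^n(0)| \ge |c| \ge 2$ (the paper phrases the key step as $|f^n(0)|^{d-1} - |f^n(0)|^{e-1} \ge |c|^{d-1} - |c|^{e-1} \ge |c|$, which sidesteps your tight $(d,e)=(3,2)$, $|c|=2$ boundary case). Only your base case needs minor repair: $|c|^d - 2|c|^{d-1}$ equals $|c|^{d-1}(|c|-2)$, not something at least $|c|^{d-1}(|c|-2)+|c|^{d-1}$, and for $|c|=2$ you must keep the sharper bound $|c|^d - |c|^{d-1} - |c|$ (or check directly) for every $d$, not only $d=3$ --- both fixes are routine.
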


\begin{proof}
Suppose $|c| > 1$ and $d > e \ge 2$. We proceed by induction. For the base case, we have
\begin{align*}
|f^2(0)| & = |c^d + c^e + c| \\
         & = |c| \cdot |c^{d - 1} + c^{e - 1} + 1| \\
         & > |c| \cdot |c^2 + c + 1| \\
         & > |c| \cdot (|c^2 + 1| - |c|) \\
         & > |c| = |f^1(0)|.
\end{align*}
Now assume $| f^n(0) | > | c |$ for some $n$. We have
\begin{align*}
|f^{n + 1}(0)| & = |(f^n(0))^d + (f^n(0))^e + c| \\
                & \geq |f^n(0)((f^n(0))^{d - 1} + (f^n(0))^{e - 1})| - |c| \\
			    & = |f^n(0)| \cdot |(f^n(0))^{d - 1} + (f^n(0))^{e - 1}| - |c| \\
                & \geq |f^n(0)| \cdot ||f^n(0)|^{d - 1} - |f^n(0)|^{e - 1}| - |c| \\
                & \geq |f^n(0)| \cdot ||c|^{d - 1} - |c|^{e - 1}| - |c| \\
                & \geq |f^n(0)| \cdot |c| - |c| \\
                & > |f^n(0)|.
\end{align*}
\end{proof}

\begin{lem} \label{lem 14}
If $f(x) = x^d + x^e + c$ where $d > e \ge2$, then either
\begin{enumerate}
\item 0 is a wandering point and $O^+_{d, e, c}$ is a increasing sequence, or
\item 0 is a preperiodic point, which occurs exactly when
	\begin{enumerate}
    \item $c = 0$, or
    \item $c = -1$ and either $d$ or $e$ is even.
    \end{enumerate}
\end{enumerate}
\end{lem}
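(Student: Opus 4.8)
The plan is to split on the value of $c$ and, in each case, either exhibit an explicit periodic cycle that the orbit of $0$ enters or show that $O^+_{d,e,c}$ is eventually strictly increasing; since an eventually strictly increasing integer sequence is unbounded, this makes $0$ wandering, whereas a bounded integer orbit is preperiodic. So the equivalence in the statement amounts to locating exactly the $c$ for which the orbit of $0$ meets a finite cycle.

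First I would dispose of the two easy ranges. If $c = 0$ then $f(0) = 0$, so $0$ is a fixed point and hence preperiodic, which is case (2a). If $|c| \ge 2$, then Lemma \ref{lem:increasing} already gives that $O^+_{d,e,c}$ is strictly increasing, so $0$ is wandering, which is case (1). This reduces everything to $c \in \{1, -1\}$.

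For $c = -1$ with at least one of $d, e$ even, I would compute the first two terms of the orbit: $f(0) = -1$ and $f(-1) = (-1)^d + (-1)^e - 1$, which equals $-1$ when exactly one of $d, e$ is even and equals $1$ when both are. A one-line check shows $f(-1) = -1$ in the first situation and $f(1) = 1 + 1 - 1 = 1$ in the second, so in either case the orbit lands on a fixed point and $0$ is preperiodic; this is case (2b) (read inclusively, so that the both-even subcase, with orbit $0 \mapsto -1 \mapsto 1 \mapsto 1 \mapsto \cdots$, is covered). For the two remaining subcases — $c = 1$, and $c = -1$ with $d, e$ both odd — the orbit begins with $|f^1(0)| = 1$ and $|f^2(0)| = 3$. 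Here the key step is the elementary estimate that for every integer $x$ with $|x| \ge 2$ one has $|f(x)| \ge |x|^d - |x|^e - 1 \ge |x|^{d-1}(|x| - 1) - 1 \ge 2|x| - 1 > |x|$, using $d - 1 \ge 2$; starting from $|f^2(0)| = 3 \ge 2$ this shows $|f^n(0)|$ is strictly increasing for $n \ge 2$, and since $1 < 3$ the full sequence $O^+_{d,e,c}$ is increasing and unbounded, so $0$ is wandering, i.e.\ case (1). Finally I would observe that these cases exhaust all $c \in \bbz$.

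I expect the only genuine friction to be the inequality in the last subcase: Lemma \ref{lem:increasing} is stated only for $|c| > 1$, so the cases $c = \pm 1$ are not directly covered and need this separate bound $|f(x)| > |x|$ valid for $|x| \ge 2$ when $|c| = 1$ (and then the integrality of the orbit to propagate it). Everything else is bookkeeping: getting the parity computations of $f(-1)$ right in each subcase, confirming that the exhibited values are actually fixed, and checking that the $c = 1$ and $c = -1$ (both odd) orbits really do escape the region $|x| \le 1$ at step $2$.
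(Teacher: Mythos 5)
Your proof is correct and follows essentially the same route as the paper: dispose of $c=0$ and the $c=-1$ parity cases by exhibiting fixed points, invoke Lemma \ref{lem:increasing} for $|c|>1$, and handle the remaining cases $c=1$ and $c=-1$ with $d,e$ odd by an induction showing the orbit's absolute values increase. The only difference is cosmetic: where the paper says ``simple induction'' (and notes $O^+_{d,e,-1}=O_{d,e,1}$), you make the inductive step explicit via the escape bound $|f(x)|>|x|$ for $|x|\ge 2$, which is a valid filling-in of the same argument.
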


\begin{proof}
The case where $|c| > 1$ is precisely Lemma \ref{lem:increasing}. 
In the case that $c = 1$, simple induction can be used to show that $O_{d,e,1}$ is an increasing sequence, and a similar argument applies in the case where $d$ and $e$ are both odd and $c = -1$. In fact, $O^+_{d,e,-1} = O_{d,e,1}$.

In the case that $c = 0$, it can easily be seen that $f^1(0) = 0$.
Otherwise, let $c = -1$. If exactly one of $d$ and $e$ is even, then $f^2(0) = -1 = f^1(0)$. In the case when $d$ and $e$ are both even we find that $f^3(0) = 1 = f^2(0)$. 
\end{proof}

Recall that if $a_n$ is a term in the sequence $(a_n)$, the primitive prime divisors of $a_n$ are the primes that do not divide $a_i$ for $1 \le i < n$.  Thus we may distinguish between the primitive and non-primitive primes of $a_n$ and write $a_n = P_nN_n$, where $P_n$ is the primitive part of $a_n$ and $N_n$ is the non-primitive part of $a_n$.  That is, $P_n$ is a product of powers of primitive primes of $a_n$, and $N_n$ is a product of powers of non-primitive primes.

\begin{lem} \label{lem_Doerksen/Haensch}
If $(a_n)$ is a rigid divisibility sequence, then
\begin{align*}
N_n = {\displaystyle \prod_{d|n, d \not = n} P_d}.
\end{align*}
\end{lem}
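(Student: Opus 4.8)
The plan is to prove the identity one prime at a time. Since, by convention, both $N_n$ and $\prod_{d\mid n,\, d\neq n} P_d$ are products of prime powers (hence positive integers), it suffices to show that $v_p(N_n) = \sum_{d\mid n,\, d\neq n} v_p(P_d)$ for every prime $p$.

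The structural input I would establish first is a description, for each fixed prime $p$, of the set $S_p = \{ m \in \bbn : v_p(a_m) \ge 1 \}$. Property (1) of rigid divisibility shows that $S_p$ is closed under passing from $m$ to any multiple $mk$, and property (2) shows that $S_p$ is closed under taking the greatest common divisor of two of its elements. From these two closure properties I would deduce that, when $S_p \neq \emptyset$, one has $S_p = d_0\bbn$ where $d_0 = \min S_p$, and moreover $v_p(a_m) = v_p(a_{d_0})$ for every $m \in S_p$ (this is immediate from property (1), writing $m = d_0 k$). In particular $p$ is a primitive prime divisor of $a_{d_0}$ and of no other term, so $v_p(P_{d_0}) = v_p(a_{d_0})$ while $v_p(P_m) = 0$ for all $m \neq d_0$.

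With this in hand the verification is a short case analysis. If $p \nmid a_n$, then every proper divisor $d$ of $n$ satisfies $a_d \mid a_n$ (as $(a_n)$ is a divisibility sequence), so $p \nmid a_d$ and both sides have $v_p = 0$. If $p \mid a_n$, then $n \in S_p$, so $d_0 \mid n$. When $d_0 = n$, the prime $p$ is primitive for $a_n$, so $v_p(N_n) = 0$, and no proper divisor of $n$ lies in $S_p$ (such a divisor would be a multiple of $d_0 = n$), so the right-hand sum is also $0$. When $d_0$ is a proper divisor of $n$, the prime $p$ is non-primitive for $a_n$, so $v_p(N_n) = v_p(a_n) = v_p(a_{d_0})$, while $d_0$ is the unique proper divisor of $n$ contributing to the product, which therefore also equals $v_p(P_{d_0}) = v_p(a_{d_0})$. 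In every case the $p$-adic valuations agree, which proves the lemma.

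I expect the only mildly delicate point to be pinning down the description $S_p = d_0\bbn$: since $S_p$ may be infinite, one must observe that $\gcd(S_p)$ is already realized by some finite subset, so that closure under pairwise gcd forces $d_0 := \gcd(S_p) \in S_p$, after which closure under multiples yields the reverse containment. Everything else is routine bookkeeping with valuations, so I would present that compactly rather than in full detail.
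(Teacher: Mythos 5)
Your proposal is correct. Note that the paper does not prove this lemma at all --- it simply cites Doerksen and Haensch \cite{dh12}, whose Lemma~6 is the same statement --- so there is no in-paper argument to compare against; your valuation-by-valuation proof is the expected one: for each prime $p$ the set $S_p=\{m: v_p(a_m)\ge 1\}$ is $d_0\bbn$ with constant valuation $v_p(a_{d_0})$ (this $d_0$ is the rank of apparition of $p$), so $p$ contributes to exactly one primitive part $P_{d_0}$, and the case analysis you give matches $v_p(N_n)$ with $\sum_{d\mid n,\,d\ne n} v_p(P_d)$ in every case. The only cosmetic remark is that your closing worry about $\gcd(S_p)$ is unnecessary: taking $d_0=\min S_p$ and using closure under pairwise gcd (so $\gcd(m,d_0)\in S_p$ forces $d_0\mid m$) already gives $S_p=d_0\bbn$, exactly as in your second paragraph; one should also tacitly assume the terms involved are nonzero, which is the setting in which the lemma is applied.
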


\begin{proof}
See \cite[Lemma 6]{dh12}.
\end{proof}

The following result determines the Zsigmondy set for $f(x)$.

\begin{prop} \label{primitive_prime}
Let $f(x) = x^d + x^e + c$, where $d > e \ge 2$. If 0 is a wandering point, then
\begin{enumerate}
\item if $c = \pm 1$, $f^n(0)$ has a primitive prime divisor for all $n \geq 2$, and
\item if $c \ne \pm 1$, $f^n(0)$ has a primitive prime divisor for all $n \geq 1$.
\end{enumerate}
\end{prop}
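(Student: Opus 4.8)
The plan is to follow the Doerksen--Haensch template for $x^d+c$ (their Lemma~6 is quoted here as Lemma~\ref{lem_Doerksen/Haensch}), adapted to the trinomial. The strategy is a size comparison: $f^n(0)$ fails to have a primitive prime divisor exactly when $P_n = 1$, i.e. when $|f^n(0)| = N_n$. By Lemma~\ref{lem_Doerksen/Haensch}, $N_n = \prod_{d\mid n,\,d\neq n} P_d$ divides $\prod_{d\mid n,\, d\neq n} |f^d(0)|$ (with the subtlety that primitive parts are bounded by the terms, though one must be careful about the archimedean factor from $P_1$ and about whether the $P_d$'s being pairwise coprime lets us write $N_n \le \prod_{d\mid n, d\neq n}|f^d(0)|$). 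The engine is therefore a growth estimate showing $|f^n(0)|$ eventually outpaces the product of all earlier terms, forcing $P_n > 1$.

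First I would establish a lower bound of the shape $|f^{n+1}(0)| \ge |f^n(0)|^{d-1}\cdot\bigl(|f^n(0)| - \text{(small correction)}\bigr)$ or, more cleanly, $|f^{n+1}(0)| \ge \tfrac12 |f^n(0)|^d$ once $|f^n(0)|$ is large enough; this refines the crude monotonicity already proved in Lemma~\ref{lem:increasing}. From such an inequality one gets super-exponential growth: roughly $\log|f^{n+1}(0)| \gtrsim d\log|f^n(0)|$. Meanwhile the product of earlier terms satisfies $\log\prod_{k=1}^{n-1}|f^k(0)| = \sum_{k=1}^{n-1}\log|f^k(0)|$, which by the same recursion is dominated by a constant times $\log|f^{n-1}(0)|$ (a geometric-type sum with ratio roughly $1/d \le 1/2$). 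Comparing, $\log|f^n(0)| \gtrsim d\log|f^{n-1}(0)|$ beats $C\log|f^{n-1}(0)|$ for all $n$ past an explicit threshold, so $P_n\neq 1$ there. The remaining finitely many small $n$ must be checked directly, splitting into the two cases of the proposition: for $c=\pm1$ one handles $n=1$ separately (since $|f^1(0)|=1$ trivially has no primitive prime divisor, which is why the statement starts at $n\ge 2$), and for $|c|>1$ one verifies $n=1$ ($f^1(0)=c$ automatically has primitive primes — its primes, vacuously, divide no earlier term) and then the first few $n$ by the growth estimate being strong enough already, or by noting $|f^2(0)|=|c||c^{d-1}+c^{e-1}+1|$ whose size visibly exceeds $|c|=|f^1(0)|$ with room to spare.

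I would organize the write-up as: (i) fix $|c|\ge 1$ and $0$ wandering, recall $O^+_{d,e,c}$ is strictly increasing (Lemma~\ref{lem:increasing} when $|c|>1$, and the $c=\pm1$ discussion in Lemma~\ref{lem 14}); (ii) prove the sharpened recursive bound $|f^{n+1}(0)| > |f^n(0)|^{d-1}$ — valid because $|(f^n(0))^d + (f^n(0))^e + c| \ge |f^n(0)|^e\bigl||f^n(0)|^{d-e} - 1\bigr| - |c| \ge |f^n(0)|^{d-1}$ once $|f^n(0)|\ge 2$, which holds for $n\ge 2$; (iii) deduce $|f^n(0)| > \prod_{k=1}^{n-1}|f^k(0)|$ for $n\ge n_0$ by induction, using $|f^n(0)| > |f^{n-1}(0)|^{d-1} \ge |f^{n-1}(0)|\cdot|f^{n-1}(0)| > |f^{n-1}(0)|\cdot\prod_{k=1}^{n-2}|f^k(0)|$ once $|f^{n-1}(0)|$ exceeds that partial product — so actually the induction closes cleanly with a small base case; (iv) conclude via Lemma~\ref{lem_Doerksen/Haensch} that $N_n < |f^n(0)|$, hence $P_n > 1$; (v) dispatch the base cases by hand for each of the two parts.

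The main obstacle I anticipate is \emph{step (iii)/(iv)}: making the passage from ``$N_n = \prod_{d\mid n,\,d\neq n}P_d$'' to a clean numerical inequality rigorous. The $P_d$ are pairwise coprime, so $N_n = \prod P_d$ genuinely divides $|f^n(0)|$, and $\prod_{d\mid n,\,d\neq n}P_d \le \prod_{d\mid n,\,d\neq n}|f^d(0)| \le \prod_{k=1}^{n-1}|f^k(0)|$ — but one must confirm $P_d \le |f^d(0)|$ including for $d=1$, where $P_1 = |f^1(0)|$ (all primes of $f^1(0)$ are primitive, vacuously), so this is fine, and one must make sure no sign/zero issues arise (handled since $0$ is wandering, so no term vanishes). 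Getting the threshold $n_0$ small enough that only genuinely trivial base cases remain — ideally $n_0 = 2$ or $3$ — requires the recursion $|f^{n+1}(0)| > |f^n(0)|^{d-1}$ to kick in as early as $|f^n(0)|\ge 2$, which is why verifying $|f^2(0)| \ge 2$ (immediate from $|f^2(0)| \ge |c|\cdot 1 \ge 1$, and a short argument that it is actually $\ge 2$) is the crux of the bookkeeping.
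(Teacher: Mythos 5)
Your proposal is correct and follows essentially the same route as the paper: apply Lemma \ref{lem_Doerksen/Haensch} to write $|f^n(0)| = P_nN_n$, prove a recursive growth bound for $|f^{n+1}(0)|$ in terms of $|f^n(0)|$, use it inductively to get $\prod_{k=1}^{n-1}|f^k(0)| < |f^n(0)|$, conclude $P_n > 1$, and check the small $n$ (and the $c=\pm1$ versus $|c|>1$ split) by hand. The only differences are cosmetic constants in the growth estimate (your $|f^{n+1}(0)| > |f^n(0)|^{d-1}$ versus the paper's bound $|f^n(0)| > |f^{n-1}(0)|\bigl(|f^{n-1}(0)|-1\bigr)^2$), which do not change the argument.
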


\begin{proof}
Assume $0$ is a wandering point. Based off of Lemma \ref{lem 14}, we can eliminate cases where $c = -1$ and where $c=0$ when either $d$ or $e$ is even. In all other cases, 0 is a wandering point.

Note that if $c = \pm 1$, then $f(0) = \pm 1$, in which case $f(0)$ does not have a primitive prime divisor. If $c \ne \pm 1$, then $f^1(0) = c$ has at least one primitive prime factor, namely any prime factor of $c$.

For $n = 2$ and $|c| \ge 1$, we have that
\begin{align*}
f^2(0) = c(c^{d - 1} + c^{e - 1} + 1).
\end{align*}
From Lemma \ref{lem 14}, the sequence $O^+_{d, e, c}$ increasing, hence $|c^{d-1}+c^{e-1}+1|>1$. Therefore $f^2(0)$ has primitive prime divisors, namely the divisors of $c^{d-1}+c^{e-1}+1$. 

Now we proceed to show that $f^n(0)$ has a primitive prime divisor for all $n \ge 3$. First we derive a result that will be helpful later. Assume $n \ge 3$, then 
\begin{align*}
|f^n(0)| & = |(f^{n - 1}(0))^d + (f^{n - 1}(0))^e + c| \\
		 & \geq |(f^{n - 1}(0))^d + (f^{n - 1}(0))^e| - |c| \\
         & \geq |f^{n - 1}(0)|^d - |f^{n - 1}(0)|^e - |c| \\
         & \geq |f^{n - 1}(0)|^3 - |f^{n - 1}(0)|^2 - |c| \\
         & > |f^{n - 1}(0)|^3 - |f^{n - 1}(0)|^2 - |f^{n - 1}(0)| \\
         & > |f^{n - 1}(0)|^3 - |f^{n - 1}(0)|^2 - |f^{n - 1}(0)|^2 + |f^{n - 1}(0)| \\
         &= |f^{n - 1}(0)|^3 - 2|f^{n - 1}(0)|^2 + |f^{n - 1}(0)|.
\end{align*}
Factoring out $|f^{n - 1}(0)|$ gives
\begin{align} \label{eq:PrimPrime}
|f^n(0)| > |f^{n - 1}(0)|(|f^{n - 1}(0)|^2 - 2|f^{n - 1}(0)| + 1).
\end{align}

Next, we show that
\begin{align*}
\prod_{k=1}^{n - 1} |f^k(0)| < |f^n(0)|.
\end{align*}
We proceed by induction. The base case, $n = 3$, may be checked readily. Now assume that $\prod_{k=1}^{n - 2} |f^k(0)| < |f^{n - 1}(0)|$ for some $n \ge 3$. Then since $|f^{n-1}(0)| > 2$, 
\begin{align*}
\prod_{k=1}^{n - 2} |f^k(0)| & < (|f^{n - 1}(0)| - 1)^2 = |f^{n - 1}(0)|^2 - 2|f^{n - 1}(0)| + 1.
\end{align*}

We now show that $\prod_{k=1}^{n - 1} |f^k(0)| < |f^n(0)|$. For this, note that
\begin{align*}
\prod_{k=1}^{n - 1} f^k(0) & = |f^{n - 1}(0)| \cdot {\displaystyle \prod_{k=1}^{n - 2} |f^k(0)|}\\
					& < |f^{n - 1}(0)| \cdot (|f^{n - 1}(0)|^2 - 2|f^{n - 1}(0)| + 1) \\
                    & < |f^n(0)|,
\end{align*}
where the last inequality follows from equation \eqref{eq:PrimPrime}.

Setting $|f^n(0)| = P_n \cdot N_n$ in accordance with Lemma \ref{lem_Doerksen/Haensch}, we see that
\begin{align*}
N_n =  \prod_{d|n, d \not = n} P_d \leq \prod_{k = 1}^{n - 1} P_k \leq  \prod_{k = 1}^{n - 1} |f^k(0)| < |f^n(0)|.
\end{align*}
Hence we see that that $P_n > 1$, and thus $f^n(0)$ has a primitive prime divisor.
\end{proof}

We now prove that $D_{d,e,c}$ is finite if and only if $c = \pm 1$.

\begin{proof}[Proof of Theorem \ref{th:DFinite}]
In the forward direction we proceed by contradiction. Assume that $D_{d, e, c}$ is finite and $c \notin \{1, -1\}$. Let $M = \max D_{d, e, c}$. By Proposition \ref{primitive_prime}, we know that every term in $O_{d,e,c}$ has a primitive prime divisor. Suppose that $p$ is a primitive prime divisor of $f^M(0)$. Since $p \mid f^M(0)$, it follows that the period of $0$ modulo $p$ is $M$, and thus $M \le p$. If $M < p$, then $v_{p}(f^M(0)) > v_{p}(M)$, and hence $Mp \in D_{d,e,c}$ by Proposition \ref{pr:DivSetProp}.\eqref{d:valuation}.  This is a contradiction to the maximality of $M$. 

Now consider the case $p = M$. Since $p \mid f^p(0)$, write $f^p(0) = mp$ where $m \in \mathbb{Z}$. As a consequence of Lemma \ref{lem:increasing}, $m > 1$, so there is some prime $q$ such that $q \mid m$. This means that $pq \mid f^p(0)$. Since $O_{d,e,c}$ is a divisibility sequence, $p \mid pq$ implies $f^p(0) \mid f^{pq}(0)$. Therefore $pq \mid f^{pq}(0)$.  So $pq \in D_{d, e, c}$, which is a contradiction.

In the reverse direction we show that if $c \in \{1, -1\}$, then $D_{d, e, \pm 1}$ is finite. Our approach is to show that $D_{d,e,\pm 1}$ does not contain any primes.  By Proposition \eqref{d:smallest_prime}, this is sufficient to show that $D_{d,e,\pm 1} = \{1\}$ .

If $d$ or $e$ is even, then by Proposition \ref{only_primes} there are no primes in $D_{d, e, c}$ except the divisors of $c$. Since $c = \pm 1$, there are no primes in $D_{d, e, \pm 1}$.

When $d$ and $e$ are both odd and $c = 1$, then $f(-1) = -1$. Since $-1$ is a fixed point, $0$ can not have period $p$ modulo any prime $p$. Therefore $D_{d,e, 1}$ contains no primes.

When $d$ and $e$ are both odd and $c = -1$, a similar argument can be made. In this case $1$ is a fixed point, and once again $D_{d, e, c}$ contains no primes.
\end{proof}

\section{Restriction of primes in the divisibility set}

In this section, we provide conditions that would prevent primes from appearing in the index divisibility set of $f(x) = x^d + x^e + c$.  By Proposition \ref{only_primes}, we know that when $d$ and $e$ are both odd, the divisibility set $D_{d,e,c}$ may contain primes that do not divide $c$.  Indeed, we find examples of this: $31 \in D_{13,3,5}$, $157 \in D_{107,3,60}$, $223 \in D_{77,3,74}$, among many others.  

As stated several times previously, for a prime $p$ to be in the index divisibility set, either $p \mid c$ or $0$ has period $p$ modulo $p$.  In the latter case, the map $f(x)$ is a cyclic permutation of $\bbz/p\bbz$.  The conditions that restrict primes from appearing in a divisibility set result from showing that $f$ is not a cyclic permutation, either because it is not a permutation or because its permutation type is not a $p$-cycle.  All the computations in this section are local and thus apply to any map that is congruent to $f(x)$ modulo $p$. 

We also note that if $d \equiv e \pmod {p-1}$, then $x^d + x^e + c \equiv 2x^d + c \pmod p$.  We treat this as a separate case later in the section.

\subsection{The case \texorpdfstring{$d \not\equiv e \pmod{p-1}$}{}}
Let $D$ denote the index divisibility set for $f(x) \in \bbz[x]$, and let $\ord_p(a)$ denote the multiplicative order $a$ in $(\bbz/p\bbz)^\times$.

\begin{prop}
Suppose $f(x) \equiv x^d + x^e + c \pmod p$, where $0 < e < d < p$.  Then $p \notin D$ if any of the following is true:
\begin{enumerate}
\item $d$ or $e$ is even and $p \nmid c$;
\item $(p-1)/\gcd(d-e,p-1)$ is even;
\item $\ord_p(2) \nmid \gcd(d-e,p-1)$;
\item $\gcd(d-e,p-1) < \log_2(p)$.
\end{enumerate}
\end{prop}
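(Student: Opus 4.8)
The plan is to use the fact recalled at the start of this section: if $p\in D$ then $0$ is periodic modulo $p$ with period dividing $p$, and since the only positive divisors of $p$ are $1$ and $p$, this means either $0$ is a fixed point modulo $p$ (equivalently $p\mid c$) or $0$ has exact period $p$, in which case $f$ is a cyclic permutation of $\bbz/p\bbz$ --- in particular a permutation of $\bbz/p\bbz$. So under the assumption $p\nmid c$ --- which is explicit in part (1) and which must also be read into parts (2)--(4), since $p\mid c$ makes $0$ a fixed point and hence places $p$ in $D$ --- it suffices to show that $f$ is \emph{not} a permutation of $\bbz/p\bbz$, and in fact for all four conditions this will follow from producing a collision $f(x)\equiv f(y)\pmod p$ with $x\not\equiv y\pmod p$. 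Set $m=\gcd(d-e,p-1)$ and let $\mu_m=\{x\in(\bbz/p\bbz)^\times:x^m=1\}=\{x:x^{d-e}\equiv1\}$, a subgroup of order $m$ because $m\mid p-1$. The key computation is that on a multiplicative coset $a\mu_m$ the quantity $x^{d-e}\equiv a^{d-e}$ is constant, so
\begin{align*}
f(x)\equiv(a^{d-e}+1)\,x^{e}+c\pmod p\qquad(x\in a\mu_m),
\end{align*}
and in particular $f(x)\equiv 2x^{e}+c\pmod p$ when $x\in\mu_m$.

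For (1), this is the local form of Proposition~\ref{only_primes}. If $d$ and $e$ are both even then, modulo $p$, $f(-x)\equiv f(x)$, so $f(1)\equiv f(-1)$ while $1\not\equiv-1\pmod p$ (as $p\ge3$); if exactly one of $d,e$ is even then $f(-1)\equiv c\equiv f(0)\pmod p$. Either way $f$ is not injective, and since $p\nmid c$ the point $0$ is neither fixed nor of period $p$, so $p\notin D$. For (2), the group of $m$-th powers in $(\bbz/p\bbz)^\times$ has order $(p-1)/m$, and hence contains the order-$2$ element $-1$ exactly when $(p-1)/m$ is even; since this group is precisely the image of $x\mapsto x^{d-e}$, condition (2) supplies $a_0\not\equiv0$ with $a_0^{d-e}\equiv-1$, and then the displayed formula gives $f(a_0)\equiv(a_0^{d-e}+1)a_0^{e}+c\equiv c\equiv f(0)$, the desired collision.

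Parts (3) and (4) are the crux, and I would first reduce (4) to (3): if $m<\log_2 p$ then $2^{m}<p$, whereas $2^{\ord_p(2)}\equiv1\pmod p$ together with $2^{\ord_p(2)}>1$ forces $2^{\ord_p(2)}\ge p+1$; hence $m<\ord_p(2)$, and no positive integer smaller than $\ord_p(2)$ can be a multiple of it, so $\ord_p(2)\nmid m$. It thus suffices to handle (3). Since $\ord_p(2)\mid p-1$, one has $\ord_p(2)\mid m\iff\ord_p(2)\mid d-e\iff 2^{d-e}\equiv1\pmod p$, so condition (3) says exactly that $2^{d-e}\not\equiv1\pmod p$, i.e.\ that $2\notin\mu_m$. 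Assuming $f$ is a permutation, injectivity of $f$ on $\mu_m$ forces $\gcd(e,m)=1$, so $x\mapsto x^{e}$ permutes $\mu_m$; then the displayed formula shows that $f$ carries $\mu_m$ onto the translated coset $c+2\mu_m$ and carries $2\mu_m$ onto $c+(2^{d}+2^{e})\mu_m$, and one compares these images, together with those of further cosets, using $2\notin\mu_m$ to force an overlap and thereby contradict injectivity.

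I expect this last step to be the main obstacle. Conditions (1) and (2) each collapse to a one-line explicit collision, but converting the condition $2\notin\mu_m$ into a genuine failure of bijectivity seems to require an existence statement --- locating a coset $b\mu_m$ with $b^{d-e}\not\equiv1$ for which $b^{d}+b^{e}$ lies in the same $\mu_m$-coset as $2$, or an analogous obstruction to surjectivity --- rather than the direct manipulations that dispatch the first two cases; handling it may well need a counting or character-sum input for the relevant fibre of $x\mapsto x^{d}+x^{e}$, and should $f$ turn out to be a permutation in some subcase of (3), one would instead have to exhibit a cycle of $f$ of length less than $p$. Condition (4) is presumably retained in the statement because, unlike (3), it can be verified without computing the order of $2$ modulo $p$.
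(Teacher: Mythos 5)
Your treatment of (1), of (2) via the explicit collision $f(a_0)\equiv f(0)$ with $a_0^{d-e}\equiv-1$, and of the reduction of (4) to (3) is correct, and your remark that $p\nmid c$ must be read into (2)--(4) matches the paper's implicit hypothesis. But the proposal does not prove (3): you reformulate it as $2\notin\mu_m$, set up the coset decomposition, and then explicitly defer the step that would convert this into a failure of bijectivity, suggesting a counting or character-sum input might be needed and even leaving open whether $f$ could be a permutation in some subcase. Since (4) is reduced to (3), this is a genuine gap at the decisive case of the proposition.

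The missing idea is a single global invariant rather than a coset-by-coset comparison: Wilson's theorem applied to the product of values. If $f$ were a permutation of $\bbz/p\bbz$, then $x\mapsto x^d+x^e$ would be a permutation fixing $0$, hence would permute $(\bbz/p\bbz)^\times$, so
\begin{align*}
\prod_{a\in(\bbz/p\bbz)^\times}\left(a^d+a^e\right)\equiv -1 \pmod p.
\end{align*}
On the other hand, factoring $a^d+a^e=a^e(a^{d-e}+1)$ and noting that $a^{d-e}$ runs $k$ times over the subgroup $H$ of order $(p-1)/k$, where $k=\gcd(d-e,p-1)$, the product equals $(-1)^e\bigl(\prod_{b\in H}(1+b)\bigr)^k$; since $\prod_{b\in H}(x-b)=x^{(p-1)/k}-1$, the inner product is $0$ when $(p-1)/k$ is even and $2$ when it is odd, so the whole product is $0$ or $-2^k$ (here $e$ is odd, the even case being handled by (1)). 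Bijectivity therefore forces $(p-1)/k$ to be odd and $2^k\equiv1\pmod p$, i.e.\ $\ord_p(2)\mid k$; thus each of (2) and (3) (hence (4)) rules out bijectivity, and your period argument together with $p\nmid c$ finishes. This is exactly the paper's computation, packaged there as the constant term in $c$ of $\res(f(x),x^{p-1}-1)$ --- a circulant determinant evaluated as a product of $x^d+x^e$ over the $(p-1)$-st roots of unity --- which must agree with the constant term of $c^{p-1}-1$ if $f$ is a permutation; it also subsumes your separate collision argument for (2).
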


\begin{proof}
The first statement is effectively a restatement of Proposition \ref{only_primes}.

For the next two cases, we recall that if $p \nmid c$, then $p \in D$ if and only if $0$ is $p$-periodic modulo $p$.  In particular, if $f(x)$ is not injective, then $p \notin D$.  By definition, $f(x)$ is injective if $f(x) - a$ has a root modulo $p$ for each $a \in \bbz/p\bbz$, and as $c$ is arbitrary, injectivity is equivalent to showing that $f(x)$ has a root modulo $p$ for all $c \in \bbz/p\bbz$.  That is, $f$ is injective if and only if $\res(f(x),x^{p-1}-1) \equiv c^{p-1}-1 \pmod p$, where
\begin{align} \label{eq:Res(f,p)}
\renewcommand{\arraystretch}{1.4}
\res(f(x),x^{p-1}-1) = \det
\bmat{
\begin{tikzpicture}[scale=.4]
\draw (0,0) node[fill=white] {$1$} -- ++(3,-3) node[fill=white] {$1$} ++(1,-1) node[fill=white] {$c$} -- ++(5,-5) node[fill=white] {$c$}
	(0,-6) node[fill=white] {$-1$} -- ++(3,-3) node[fill=white] {$-1$}
	(0,-6) node[fill=white] {$-1$} -- ++(3,-3) node[fill=white] {$-1$}
	(4,0) node[fill=white] {$1$} -- ++(5,-5) node[fill=white] {$1$}
	(4,-2.5) node[fill=white] {$1$} -- ++(5,-5) node[fill=white] {$1$};
\end{tikzpicture}
}
\end{align}
is the resultant of $f(x)$ and $x^{p-1}-1$.  This resultant is the determinant of a $(d+p-1) \times (d+p-1)$ matrix, where the entries in the first $d$ columns correspond to the coefficients of $x^{p-1}-1$, and the entries in the last $p-1$ columns correspond to the coefficients of $f(x)$.  For simplicity, only the nonzero entries are shown in \eqref{eq:Res(f,p)}.  The $-1$'s in the bottom left of the matrix may be eliminated using elementary row operations, reducing the computation to the determinant of the following $(p-1) \times (p-1)$ matrix:
\begin{align*}
\res(f(x),x^{p-1}-1) = \det \bmat{
\begin{tikzpicture}[scale=.6]
\draw (0,0) node[fill=white] {$c$} -- ++(5,-5) node[fill=white] {$c$}
	(1.5,0) node[fill=white] {$1$} -- ++(3.5,-3.5) node[fill=white] {$1$}
	(0,-4) node[fill=white] {$1$} -- ++(1,-1) node[fill=white] {$1$}
	(3.5,0) node[fill=white] {$1$} -- ++(1.5,-1.5) node[fill=white] {$1$}
	(0,-2) node[fill=white] {$1$} -- ++(3,-3) node[fill=white] {$1$};
\end{tikzpicture}
}.
\end{align*}
We note that the matrix itself is the circulant matrix for $f(x)$.  That is, the entries in the top row correspond to the coefficients of $f(x)$, and otherwise, the entries in each proceeding row are shifted by one to the right relative to the row above.  The determinant of this matrix is a polynomial in $c$.  Rather than compute all of the coefficients of this polynomial, we concentrate on the constant term as it is simpler to compute.  The constant term is
\begin{align} \label{eq:ResConst}
\det \bmat{
\begin{tikzpicture}[scale=.6]
\draw (1.5,0) node[fill=white] {$1$} -- ++(3.5,-3.5) node[fill=white] {$1$}
	(0,-4) node[fill=white] {$1$} -- ++(1,-1) node[fill=white] {$1$}
	(3.5,0) node[fill=white] {$1$} -- ++(1.5,-1.5) node[fill=white] {$1$}
	(0,-2) node[fill=white] {$1$} -- ++(3,-3) node[fill=white] {$1$};
\end{tikzpicture}
}.
\end{align}
In particular, if the determinant of this matrix in equation \eqref{eq:ResConst} is not congruent to $-1$ modulo $p$, then $\res(f(x),x^{p-1}-1) \not\equiv c^{p-1}-1 \pmod p$.

The matrix in equation \eqref{eq:ResConst} is the $(p-1) \times (p-1)$ circulant matrix for the polynomial $g(x) = x^d + x^e$, and the determinant of such a matrix is
\begin{align*}
\prod_{n=1}^{p-1} g(\zeta^n),
\end{align*}
where $\zeta$ is a primitive $(p-1)$-st root of unity.  Computing this product, we have
\begin{align*}
\prod_{n=1}^{p-1} g(\zeta^n) &= \prod_{n=1}^{p-1} \zeta^{dn} + \zeta^{en}\\
&= \prod_{n=1}^{p-1} \zeta^{en}\prod_{n=1}^{p-1} (\zeta^{(d-e)n} + 1)\\
&= \left(\prod_{n=1}^{p-1} \zeta^n\right)^e\left(\prod_{n=1}^{(p-1)/k} (\zeta^{(d-e)n} + 1)\right)^k,
\end{align*}
where $k = \gcd(d-e,p-1)$.  Note that the first product is the product of the roots of $x^{p-1}-1$, while the second is the product of the roots of $(x-1)^{(p-1)/k}-1$, hence
\begin{align*}
\prod_{n=1}^{p-1}\zeta^n = -1 \atext{and} \prod_{n=1}^{(p-1)/k} (\zeta^{(d-e)n}+1) = \begin{cases*}
0 & if $(p-1)/k$ is even \\
-2 & if $(p-1)/k$ is odd. 
\end{cases*}
\end{align*}
Since $e$ is odd, and $k$ is even when $(p-1)/k$ is odd, we have
\begin{align} \label{eq:MatDet}
\left(\prod_{n=1}^{p-1} \zeta^n\right)^e\left(\prod_{n=1}^{(p-1)/k} (\zeta^{(d-e)n} + 1)\right)^k = \begin{cases*}
0 & if $(p-1)/k$ is even \\
-2^k & if $(p-1)/k$ is odd.
\end{cases*}
\end{align}

Note that $-2^k \equiv -1 \pmod{p-1}$ if and only if $\ord_p(2) \mid k$.  Thus if $(p-1)/k$ is even or $\ord_p(2) \nmid k$, then $\res(f(x),x^{p-1}-1) \not\equiv c^{p-1}-1 \pmod p$.  Therefore $f(x)$ is not injective and $p \notin D$.

Finally, we note that $\log_2(p) < \ord_p(2) \le p-1$ and $2 \le k < p-1$. Hence if $k < \log_2(p)$, then $\ord_p(2) \nmid k$, and so $p \notin D$.  While this statement only takes advantage of the trivial bounds for $\ord_2(p)$ and $k$, it not require the exact value of $\ord_2(p)$.  
\end{proof}

We note that the determinant in equation \eqref{eq:ResConst} may also be computed directly; we outline one alternative proof.  First, shift the each of the first $e$ columns by $p-e-1$ to the right so that 1's are on the diagonal.  This shifting requires an odd number of column swaps, so 
\begin{align*}
\det \bmat{
\begin{tikzpicture}[scale=.6]
\draw (1.5,0) node[fill=white] {$1$} -- ++(3.5,-3.5) node[fill=white] {$1$}
	(0,-4) node[fill=white] {$1$} -- ++(1,-1) node[fill=white] {$1$}
	(3.5,0) node[fill=white] {$1$} -- ++(1.5,-1.5) node[fill=white] {$1$}
	(0,-2) node[fill=white] {$1$} -- ++(3,-3) node[fill=white] {$1$};
\end{tikzpicture}
} 
= -\det \bmat{
\begin{tikzpicture}[scale=.6]
\draw (0,0) node[fill=white] {$1$} -- ++(5,-5) node[fill=white] {$1$}
	(1.5,0) node[fill=white] {$1$} -- ++(3.5,-3.5) node[fill=white] {$1$}
	(0,-4) node[fill=white] {$1$} -- ++(1,-1) node[fill=white] {$1$};
\end{tikzpicture}
}.
\end{align*}
Then, by exploiting the fact that there are exactly two 1's in each row and each column, the matrix may be arranged, via an even number of swaps, into block diagonal form:
\begin{align*}
-\det \bmat{
\begin{tikzpicture}[scale=.6]
\draw (0,0) node[fill=white] {$1$} -- ++(5,-5) node[fill=white] {$1$}
	(1.5,0) node[fill=white] {$1$} -- ++(3.5,-3.5) node[fill=white] {$1$}
	(0,-4) node[fill=white] {$1$} -- ++(1,-1) node[fill=white] {$1$};
\end{tikzpicture} 
}
= -\det \bmat{B_1 \\ & B_2 \\ & & \ddots \\ & & & B_k}
\end{align*}
where $k = \gcd(d-e,p-1)$, and each block is a $((p-1)/k) \times ((p-1)/k)$ matrix with $1$'s on the diagonal, superdiagonal, and in the bottom left corner:
\begin{align*}
B_1 = \cdots = B_k = \bmat{1 & 1 \\ & 1 & 1 \\ & & \ddots & \ddots \\  & & & 1 & 1\\ 1 & & & & 1}.
\end{align*}
Since
\begin{align*}
\det B_i &= \det \bmat{1 \\ & 1 \\ & & \ddots \\ & & & 1} + \det \bmat{0 & 1 \\ & \ddots & \ddots \\ & & 0 & 1 \\ 1 & & & 0} \\
&= \begin{cases*}
0 & if $(p-1)/k$ is even \\
2 & if $(p-1)/k$ is odd,
\end{cases*}
\end{align*}
it follows that 
\begin{align*}
-\det \bmat{B_1 \\ & B_2 \\ & & \ddots \\ & & & B_k} = \begin{cases*}
0 & if $(p-1)/k$ is even \\
-2^k & if $(p-1)/k$ is odd,
\end{cases*}
\end{align*}
which agrees with equation \eqref{eq:MatDet}.

\subsection{The case \texorpdfstring{$d \equiv e \pmod{p-1}$}{}}

In the case that $d \equiv e \pmod{p-1}$, we have $x^d + x^e + c \equiv 2x^d + c \pmod p$.  We obtain a very simple condition in the case $d \equiv 1 \pmod{p-1}$.

\begin{prop}
If $f(x) \in \bbz[x]$ and $f(x) \equiv ax+c \pmod p$, then $p \in D$ only if $a \equiv 1 \pmod p$ or $c \equiv 0 \pmod p$.
\end{prop}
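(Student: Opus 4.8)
The plan is to reduce the claim to a direct computation of $f^p(0)$ modulo $p$, mirroring the linear-term discussion at the end of Section~\ref{sec:DivisibilityGraph}. Since $f(x) \equiv ax + c \pmod p$ and reduction modulo $p$ commutes with composition, one has $f^n(0) \equiv g^n(0) \pmod p$ for every $n$, where $g(x) = ax + c$. A one-line induction gives the closed form $g^n(0) = c\sum_{i=0}^{n-1} a^i$, so in particular
\begin{align*}
f^p(0) \equiv c\sum_{i=0}^{p-1} a^i \pmod p.
\end{align*}

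The next step is to evaluate the geometric sum $\sum_{i=0}^{p-1} a^i$ modulo $p$. If $a \equiv 1 \pmod p$, the sum is $\equiv p \equiv 0 \pmod p$. If $a \not\equiv 1 \pmod p$, the sum is $\equiv 1 \pmod p$: when $a \equiv 0 \pmod p$ this is immediate (only the $i = 0$ term survives), and when $a \not\equiv 0, 1 \pmod p$ one writes $\sum_{i=0}^{p-1} a^i = (a^p - 1)/(a - 1)$ and applies Fermat's little theorem $a^p \equiv a \pmod p$. This is exactly the congruence already recorded in Section~\ref{sec:DivisibilityGraph}, which one could cite outright. Consequently, if $a \not\equiv 1 \pmod p$ then $f^p(0) \equiv c \pmod p$.

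To finish, suppose $p \in D$, so that $p \mid f^p(0)$. If $a \not\equiv 1 \pmod p$, then the previous paragraph gives $p \mid c$, i.e.\ $c \equiv 0 \pmod p$; so $p \in D$ forces $a \equiv 1 \pmod p$ or $c \equiv 0 \pmod p$, which is the assertion. There is no genuine obstacle here: the proposition is essentially the special case $b = a$ of the computation already carried out in Section~\ref{sec:DivisibilityGraph}, and the only point needing a word of care is the degenerate case $a \equiv 0 \pmod p$, where $g$ is constant and the geometric-series identity does not literally apply --- there one simply observes $g^n(0) = c$ for all $n \ge 1$.
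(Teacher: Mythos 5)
Your argument is correct and is essentially the paper's own proof: the paper likewise establishes by a short induction that $f^p(x) = a^p x + c\sum_{i=0}^{p-1} a^i$, evaluates the geometric sum modulo $p$ (it is $0$ when $a \equiv 1 \pmod p$ and $1$ otherwise, by Fermat's little theorem), and concludes immediately. Your extra remark about the degenerate case $a \equiv 0 \pmod p$ is a fine point of care but does not change the approach.
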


\begin{proof}
A simple induction shows that 
\begin{align*}
f^p(x) = a^px + c\left(\sum_{i=0}^{p-1} a^i\right) \equiv \begin{cases*}
ax & if $a \equiv 1 \pmod p$\\
ax + c & if $a \not\equiv 1 \pmod p$.
\end{cases*}
\end{align*}
The result follows immediately.
\end{proof}

Returning to the map $2x^d + c$, we note that $\tau(x) = x + 1$ and $\sigma(x) = 2x$ are permutations of $\bbz/p\bbz$, and the map $x^d$ is a permutation of $\bbz/p\bbz$ if and only if $\gcd(d,p-1) = 1$.  Therefore $f(x) = \tau^c \circ \sigma \circ \pi (x)$ is a permutation of $\bbz/p\bbz$ if and only if $\gcd(d,p-1) = 1$.  

Moreover, cyclic permutations of $\bbz/p\bbz$ are even.  Hence if $f(x)$ is an odd permutation of $\bbz/p\bbz$, then $p$ is not in the divisibility set of $f(x)$.

\begin{lem} \label{lem:PowerMapOdd}
If $p \equiv 1 \pmod 4$, then $x^d$ is a odd permutation of $\bbz/p\bbz$ if and only if $d \equiv 3 \pmod 4$.
\end{lem}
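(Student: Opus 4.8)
The plan is to analyze the parity of the permutation $x \mapsto x^d$ on $\bbz/p\bbz$ by decomposing it into cycles and counting the number of even-length cycles (an even-length cycle is an odd permutation, so the parity of the whole permutation is the parity of the number of even-length cycles). Fix a primitive root $g$ modulo $p$, so that $(\bbz/p\bbz)^\times$ is cyclic of order $p-1$. The map $x \mapsto x^d$ fixes $0$, and on $(\bbz/p\bbz)^\times \cong \bbz/(p-1)\bbz$ it is the multiplication-by-$d$ map $y \mapsto dy$. Thus the cycle structure of $x^d$ on the nonzero residues is exactly the cycle structure of multiplication by $d$ on $\bbz/(p-1)\bbz$.

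First I would recall that since $p \equiv 1 \pmod 4$ and $\gcd(d, p-1) = 1$ (which we may assume, since otherwise $x^d$ is not even a permutation), $d$ is a unit modulo $p-1$; write $m = \ord_{p-1}(d)$ for its multiplicative order in $(\bbz/(p-1)\bbz)^\times$. The orbit of $y$ under multiplication by $d$ has length $\ell$, where $\ell$ is the least positive integer with $d^\ell y \equiv y \pmod{p-1}$, i.e. the least $\ell$ with $(p-1) \mid (d^\ell - 1)y$; equivalently $\ell = \ord_{(p-1)/\gcd(y,p-1)}(d)$. So the orbits are grouped by the value of $s = \gcd(y, p-1)$: for each divisor $s$ of $p-1$ with $s \ne p-1$, the $\varphi((p-1)/s)$ residues $y$ with $\gcd(y,p-1)=s$ split into cycles of common length $\ord_{(p-1)/s}(d)$. (The divisor $s = p-1$ contributes the fixed point $y=0$, corresponding to $x=1$.) Then I would compute, modulo $2$, the total number of even-length cycles as
\[
\sum_{\substack{s \mid p-1 \\ s \ne p-1}} \frac{\varphi((p-1)/s)}{\ord_{(p-1)/s}(d)} \cdot [\ord_{(p-1)/s}(d) \text{ is even}],
\]
reindex by $t = (p-1)/s$ running over divisors $t > 1$ of $p-1$, and simplify using that $\varphi(t)/\ord_t(d)$ is even whenever $\ord_t(d)$ is odd and $\varphi(t) > \ord_t(d)$ — so the only terms that can contribute an odd count are those where $\ord_t(d)$ is even, or the corner cases where $\ord_t(d) = \varphi(t)$. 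The cleanest route is probably to write $p - 1 = 2^a u$ with $u$ odd and $a \ge 2$, and track the $2$-part of the orbit lengths: an orbit length $\ord_t(d)$ is even precisely when the image of $d$ in $(\bbz/t\bbz)^\times$ has even order, which for the $2$-part is governed by whether $d \equiv 1$ or $d \equiv 3 \pmod 4$ (recall $a \ge 2$, so $4 \mid p-1$ and $(\bbz/4\bbz)^\times$ is visible inside $(\bbz/t\bbz)^\times$ for every $t$ divisible by $4$).

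The main obstacle — and where I would spend the most care — is the bookkeeping modulo $2$ in that sum: showing that all contributions cancel in pairs except for a single controlling term. Concretely, when $d \equiv 1 \pmod 4$ every orbit length is a divisor of an odd number times the order of $d$ in the odd part, and in fact one checks each $\ord_t(d)$ is odd, or the even ones come in pairs; so the permutation is even. When $d \equiv 3 \pmod 4$, among the divisors $t$ of $p-1$ with $4 \mid t$, the order $\ord_t(d)$ is even (since $d$ has order $2$ mod $4$), and one isolates exactly one such $t$ — namely $t = 2^a u' $ structured to make $\varphi(t)/\ord_t(d)$ odd — that flips the parity, while all other even-length cycle counts cancel. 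I expect the argument is cleanest by strong induction on the number of prime factors of $u$, or by directly invoking the standard formula for the sign of the multiplication-by-$d$ map on $\bbz/N\bbz$ (which is a known computation, essentially a Zolotarev-type lemma): its sign equals the Jacobi-type symbol controlled by $d$ modulo the $2$-part of $N$, and for $4 \mid N$ this is $+1$ iff $d \equiv 1 \pmod 4$. Substituting $N = p-1$ and using $4 \mid p-1$ gives the claim. I would present the direct cycle-counting version if it is short enough, and otherwise cite or reprove the sign-of-multiplication lemma.
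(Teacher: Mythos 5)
The paper gives no internal argument for this lemma---its ``proof'' is just a pointer to the proof of \cite[Theorem~1.3]{cgs17}---so the comparison is really with the standard argument behind that citation, and your main route is exactly that argument and is correct: identify $(\bbz/p\bbz)^\times$ with $\bbz/(p-1)\bbz$ via a primitive root, so that $x\mapsto x^d$ (plus the parity-irrelevant fixed point $0$) becomes multiplication by $d$ on $\bbz/(p-1)\bbz$, note $4\mid p-1$, and invoke the Zolotarev--Frobenius--Lerch sign formula for multiplication maps on $\bbz/N\bbz$ with $4\mid N$ (sign $+1$ iff $d\equiv 1\pmod 4$), which you state correctly. You are also right to flag the implicit hypothesis $\gcd(d,p-1)=1$: as literally stated the ``if'' direction fails when $x^d$ is not a permutation (e.g.\ $p=13$, $d=3$), so the lemma must be read with that assumption, as it is in the paper's application.

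Two cautions if you intend the self-contained cycle count rather than citing that sign formula. First, the intermediate claim that for $d\equiv 1\pmod 4$ ``each $\ord_t(d)$ is odd'' is false (e.g.\ $d=5$, $t=3$); the real content is that the even-length cycles then come in an even number, and the promised pairing/cancellation is precisely what remains to be proved---your sketch asserts it but does not establish it. The bookkeeping becomes easy if you first split $\bbz/(p-1)\bbz\cong \bbz/2^a\bbz\times\bbz/u\bbz$ with $u$ odd and $a\ge 2$: for a product permutation $\sigma\times\tau$ the sign is $\mathrm{sgn}(\sigma)^{u}\,\mathrm{sgn}(\tau)^{2^a}=\mathrm{sgn}(\sigma)$, so only multiplication by $d$ on $\bbz/2^a\bbz$ matters. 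There, the $2^{m-1}$ elements of $2$-adic valuation $a-m$ fall into $2^{m-1}/\ord_{2^m}(d)$ orbits; for $m\ge 3$ this count is an even power of $2$ (since $\ord_{2^m}(d)\mid 2^{m-2}$), so those levels never affect the parity, while the level $m=2$ contributes a single transposition exactly when $d\equiv 3\pmod 4$. This completes the elementary version of your argument and yields the lemma.
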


\begin{proof} 
See the proof of \cite[Theorem 1.3]{cgs17}.
\end{proof}

\begin{prop} \label{pr:restrict}
Suppose $f(x) \in \bbz[x]$ and $f(x) \equiv ax^d + c \pmod p$, where $p \equiv 1 \pmod 4$, $d \equiv 3 \pmod 4$, and $\ord_p(a)$ is odd.  Then $p \notin D$.
\end{prop}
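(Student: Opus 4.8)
The plan is to reduce the statement to a sign computation in the symmetric group $S_p$. Throughout we use the standing assumption $p\nmid c$ of this part of the section (when $p\mid c$ the map fixes $0$ modulo $p$), and we note that $p\nmid a$ since $\ord_p(a)$ is defined. Recall from the discussion at the start of the section that, for $p\nmid c$, a prime $p$ lies in $D$ only if $0$ is $p$-periodic modulo $p$, which forces $f$ to act on $\bbz/p\bbz$ as a single $p$-cycle. Since an odd-length cycle is a product of an even number of transpositions, a $p$-cycle is an even permutation. Hence it is enough to show that $f$, reduced modulo $p$, is either not a permutation of $\bbz/p\bbz$ or else an \emph{odd} permutation.

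First I would dispose of the non-permutation case: if $\gcd(d,p-1)>1$ then $x^d$ is not injective on $\bbz/p\bbz$, hence neither is $f\equiv ax^d+c$, since $x\mapsto ax$ and $x\mapsto x+c$ are bijections; then $f$ is not a $p$-cycle and $p\notin D$. In the remaining case $\gcd(d,p-1)=1$, write $f\equiv\tau^c\circ\sigma_a\circ\pi_d\pmod p$ with $\pi_d(x)=x^d$, $\sigma_a(x)=ax$, and $\tau(x)=x+1$, and compute signs factor by factor. The permutation $\tau$ is a single $p$-cycle, hence even (as $p$ is odd), so $\tau^c$ is even. The permutation $\sigma_a$ fixes $0$ and decomposes $(\bbz/p\bbz)^\times$ into $(p-1)/\ord_p(a)$ disjoint cycles, each of length $\ord_p(a)$; since this length is odd, each such cycle is even, and therefore $\sigma_a$ is even. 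Finally, by Lemma \ref{lem:PowerMapOdd}, $\pi_d$ is an odd permutation of $\bbz/p\bbz$ because $p\equiv1\pmod4$ and $d\equiv3\pmod4$. Multiplying the three signs, $f$ is an odd permutation, hence not a $p$-cycle, so $0$ is not $p$-periodic modulo $p$ and $p\notin D$.

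I do not expect a serious obstacle here; the whole argument is a parity count in $S_p$. The points that need a little care are the cycle-parity bookkeeping — in particular, noticing that it is precisely the hypothesis that $\ord_p(a)$ is odd which makes $\sigma_a$ an even permutation — and remembering to peel off the case where $x^d$ is not a permutation of $\bbz/p\bbz$ at all, since Lemma \ref{lem:PowerMapOdd} presupposes $\gcd(d,p-1)=1$.
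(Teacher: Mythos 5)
Your proposal is correct and takes essentially the same route as the paper's proof: write $f \equiv \tau^c \circ \sigma_a \circ \pi_d \pmod p$, observe that $\tau^c$ is even, that $\sigma_a$ is even because $\ord_p(a)$ is odd, and that $\pi_d$ is odd by Lemma \ref{lem:PowerMapOdd}, so $f$ is an odd permutation and cannot be a $p$-cycle. Your explicit treatment of the side cases ($\gcd(d,p-1)>1$, where $f$ is not even a permutation, and the standing assumption $p \nmid c$) is slightly more careful than the paper, which handles these points only in the discussion preceding the proposition.
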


\begin{proof}
The translation map $\tau(x) = x + 1$ is a cyclic permutation of $\bbz/p\bbz$ and is even.  Since $\ord_p(a)$ is odd, the cycle $(a, a^2, a^3, \ldots, a^{\ord_p(a)})$ is an even permutation, hence the scaling map $\sigma(x) = ax$ is an even permutation. Finally, $\pi(x) = x^d$ is an odd permutation by Lemma \ref{lem:PowerMapOdd}.  Thus $f(x)$ is an odd permutation of $\bbz/p\bbz$.
\end{proof}

For our polynomial $2x^d+c$, the conditions $p \equiv 1 \pmod 4$ and $\ord_p(2)$ is odd in Proposition \ref{pr:restrict}, when taken together, are equivalent to $p \equiv 1 \pmod 8$.  The reason for this is that $2$ is not a quadratic residue if $p \equiv 5 \pmod 8$, and therefore the order of $2$ is even.  In particular, in order for $\ord_p(2)$ to be odd, it must be that $2$ is a $2^v$-th power in $\bbz/p\bbz$, where $v = v_2(p-1)$.  There are $(p-1)/2^v$ values which are $2^v$-th powers modulo $p$, so if $p\equiv 1 \pmod 8$ and we assume the heuristic that all values are equally likely to generate $(\bbz/p\bbz)^\times$ (c.f. Artin's conjecture), then the probability that that $2$ is a $2^v$-th power given that it is already a square is
\begin{align*}
\frac{(p-1)/2^v}{1/2} = \frac{1}{2^{v-1}}.
\end{align*}
The primes that are congruent to $1$ modulo $8$ may be partitioned into sets of the form $p \equiv 2^{k-1}+1 \pmod{2^k}$ for $k \ge 4$.  As primes are distributed equally across equivalence classes, the proportion of primes satisfying $p \equiv 2^{k-1}+1 \pmod{2^k}$ is $1/2^{k-1}$.  Thus we expect that the proportion of all primes where $p \equiv 1 \pmod 8$ and $\ord_p(2)$ is
\begin{align*}
\sum_{k=4}^\infty \frac{1}{2^{k-1}} \cdot \frac{1}{2^{k-2}} = \frac{1}{24},
\end{align*}
and therefore Proposition \ref{pr:restrict} is only sufficient to remove $1/24$ of all primes from consideration.  

Given a sequence $(a_n)$, the rank of apparition function $t(x)$ gives the minimum value $n$ such that $x \mid a_n$.  This function plays a key role in the study of Lucas sequences \cite{k17} and elliptic divisibility sequences \cite{s17}.  In our case, the rank of apparition is the period of $0$ modulo $x$.  It would be interesting to see if the methods of Sanna and Kim can be translated to the dynamical setting to give more concrete results regarding primes in index divisibility sets.
\bibliographystyle{abbrv}
\bibliography{DDS}

\end{document}